
\documentclass[11pt,twoside,letter]{article}
\pagestyle{myheadings}


\usepackage{amsthm, amsmath, amssymb, amsfonts, graphicx, epsfig}
\usepackage{accents}

\usepackage{bbm}
\usepackage{mathrsfs}

\usepackage{ulem}

\usepackage{cancel}

\usepackage{epstopdf}

\usepackage{graphicx}
\usepackage[font=sl,labelfont=bf]{caption}
\usepackage{subcaption}

\usepackage{float}
\restylefloat{table}



\usepackage{enumerate}





\usepackage[usenames,dvipsnames]{color}

\definecolor{labelkey}{rgb}{0,0,1}

\usepackage[colorlinks=true, pdfstartview=FitV, linkcolor=blue,
            citecolor=blue, urlcolor=blue]{hyperref}
\usepackage[usenames]{color}

\usepackage{url}
\makeatletter\def\url@leostyle{%
 \@ifundefined{selectfont}{\def\UrlFont{\sf}}{\def\UrlFont{\scriptsize\ttfamily}}} \makeatother\urlstyle{leo}

\usepackage[framemethod=default]{mdframed}

\usepackage{array}


\usepackage[margin=1.0in, letterpaper]{geometry}


\newtheorem{theorem}{Theorem}
\newtheorem{proposition}[theorem]{Proposition}
\newtheorem{lemma}[theorem]{Lemma}

\newtheorem{assumption}[theorem]{Assumption}
\theoremstyle{definition}

\theoremstyle{remark}
\newtheorem{remark}[theorem]{Remark}

\numberwithin{equation}{section}
\numberwithin{theorem}{section}

\definecolor{Red}{rgb}{1.0,0,0.0}
\definecolor{Blue}{rgb}{0,0.0,1.0}



\def\cB{\mathcal{B}}

\def\cM{\mathcal{M}}

\def\cP{\mathcal{P}}
\def\cQ{\mathcal{Q}}


\def\bE{\mathbb{E}}
\def\bF{\mathbb{F}}

\def\bN{\mathbb{N}}

\def\bP{\mathbb{P}}

\def\bR{\mathbb{R}}


\def\sD{\mathscr{D}}

\def\sF{\mathscr{F}}
\def\sG{\mathscr{G}}

\def\sX{\mathscr{X}}
\def\sY{\mathscr{Y}}
\def\sZ{\mathscr{Z}}


\def\mI{\mathsf{I}}
\def\mJ{\mathsf{J}}

\def\mQ{\mathsf{Q}}

\def\mU{\mathsf{U}}
\def\mV{\mathsf{V}}

\newcommand{\wt}{\widetilde}
\newcommand{\wh}{\widehat}

\newcommand{\1}{\mathbbm{1}}            
\newcommand{\set}[1]{\{#1\}}            

\DeclareMathOperator{\dif}{d \!}        
\DeclareMathOperator{\supp}{supp}          

\linespread{1.1}

\title{The two-sided exit problem for an additive functional of a time-inhomogeneous Markov chain}


\author{
    Tomasz R. Bielecki 
	\thanks{Department of Applied Mathematics, Illinois Institute of Technology, USA. \textbf{Email:} tbielecki@iit.edu}
 \and
    Ziteng Cheng 
	\thanks{Department of Statistical Sciences, University of Toronto, Canada. \textbf{Email:} ziteng.cheng@utoronto.ca}
 \and
     Ruoting Gong 
	\thanks{Mathematical Reviews, American Mathematical Society, USA. \textbf{Email:} rxg@ams.org}
        }

\date{\today}



\begin{document}

\maketitle

\begin{abstract}\noindent
We consider an additive functional driven by a time-inhomogeneous Markov chain with a finite state space. Our study focuses on the joint distribution of the two-sided exit time and the state of the driving Markov chain at the time of exit, given in terms of expectation operators. These operators can be expressed as compositions of other operators related to some relevant one-sided exit (or first passage) problems. In addition, we study the law of the driving Markov chain at times prior to the exit time.
\end{abstract}

\section{Introduction}

In this paper we consider a time-inhomogeneous Markov process {$X=(X_t)_{t\ge0}$} with finite state space $\mathbf E$, and an additive functional
\begin{align}\label{eq:Defphi0}
\phi_{t}(s):=\int_{s}^{t}v(X_{u})\,du,\quad t\in[s,\infty),
\end{align}
where $s\geq 0$ and $v\ne 0$ is a real valued function defined on $\mathbf E$. For $\ell\ge 0$, we define the following first passage times
\begin{align}
\tau_{\ell}^{+}(s):=\inf\left\{t\in[s,\infty]:\,\phi_{t}(s)>\ell\right\}\quad\text{and}
\quad\tau_{\ell}^{-}(s):=\inf\left\{t\in[s,\infty]:\,\phi_{t}(s)<-\ell\right\}.
\end{align}
Our goal is to provide a analytical/algebraic expression for the following expectation
\begin{align}\label{eq:MainProblem}
\bE_{s,i}\left(g\left(\tau^{-}_{\ell^-}(s)\wedge\tau^{+}_{\ell^+}(s), X_{\tau^{-}_{\ell^-}(s)\wedge\tau^{+}_{\ell^+}(s)}\right)\right)
\end{align}
for any $s\ge 0$, $i\in\mathbf E$, $\ell^+,\ell^-\ge 0$ and bounded $g:\bR_+\times\mathbf E\to\bR$. This is done via \eqref{eq:XipmSum} and Theorem \ref{thm:TwoSidedExit}.

%

For the case of $\ell^-=-\infty$ (i.e., one-sided exit, or, first passage), under suitable conditions, one can derive equations that compute $\bE(g(\tau^{+}_{\ell^+}(s), X_{\tau^{+}_{\ell^+}(s)}))$ for various $g$'s. Most of the literature considers time-homogeneous setting. \cite{BarlowRogersWilliams1980} and \cite{KennedyWilliams1990} respectively study the one-sided exit problems without and with additive Brownian noise. That is, respectively, the case of
\begin{align}\label{eq:Defphi0-1}
\varphi_{t}:=\int_{0}^{t}v(X_{u})\,du,\quad t\in[0,\infty),
\end{align} and the case of \begin{align}\label{eq:Defphi0-2}
\widehat \varphi_{t}:=\int_{0}^{t}v(X_{u})\,du +W_t,\quad t\in[0,\infty),
\end{align} where $(W_{t})_{t\geq 0}$ is a standard Brownian motion independent of $X$. In \cite{Rogers1994}, alternative proofs of the main results in \cite{BarlowRogersWilliams1980} and \cite{KennedyWilliams1990} are provided. The proofs are facilitated by introducing certain novel martingales (cf. \cite[(2.10) and (7.2)]{Rogers1994}). Using similar martingales, \cite{JiangPistorius2008} generalizes the previous results to allow coexistence of unnoisy and noisy regions (we refer to \cite[Sections 3 and 4]{JiangPistorius2008} for details). Recently, \cite{BieCheCiaGon2020} studied the one-sided exit problem and computation of $\bE_{s,i}\left (g\left(\tau^{+}_{\ell^+}(s), X_{\tau^{+}_{\ell^+}(s)}\right)\right )$  in a time-inhomogeneous setting. Although martingales analogous to the aforementioned martingales are not explicitly used there, the proof in \cite{BieCheCiaGon2020} is built upon a similar martingale idea.

However, the martingale method described above for the one-sided exit problems is not readily applicable to the two-sided exit problem here. Therefore, in this paper, we will express \eqref{eq:MainProblem} in terms of some operators related to $\tau^\pm_{\ell^\pm}$, which are motivated by studies done in \cite{BieCheCiaGon2020}. The analogous idea
can also be found in \cite{JiangPistorius2008} under time-homogeneous setting. However, some of the techniques used in \cite{JiangPistorius2008} are not quite compatible with time-inhomogeneity, thus we adopt a different approach. Our approach is mainly based on the probabilistic decomposition such as \eqref{eq:XiRewrite} and its analytic counterpart \eqref{eq:JPminusDecomp}, which originate from \eqref{eq:XipmSum} and a reformulation of the problem in Section \ref{subsec:Markov}.

It is important to acknowledge the fact that the two-sided exit problem is used in  various applications. For example, in \cite{JiangPistorius2008} the solution of a two-sided exit problem was used as a tool for dealing with pricing a perpetual American option subject to a stock dynamics that is modulated by a Markov driver. In \cite{EbertStrack2015} a two-sided exit problem was used a tool in studying issues arising in so called prospect theory. In \cite{StechmannNeelin2014}  a two-sided exit problem was considered in relation to precipitation statistics. Numerous applications in physics of two-sided exit problems are discussed in \cite{Redner2001}.

The rest of the paper is organized as below. We first introduce our setup in Section \ref{sec:Setup}. In Section \ref{sec:MainResults} we present the main result of this paper, Theorem \ref{thm:TwoSidedExit}. Then, we give Proposition \ref{prop:AfterExit} which demonstrates that our main result can also be used to compute expectation of a function the driving state at times prior to the two-sided exit. The proofs are gathered in Section \ref{sec:Proofs}. Finally,  in Section \ref{sec:Conclusion}, we close with some concluding remarks and suggestions for a follow-up research.

\section{Setup}\label{sec:Setup}

Our setup is similar to that of \cite{BieCiaGonHua2020}, except that some assumptions on the regularity of the generator of the underlying Markov chain are relaxed here. The rest of the section details our setup.

\subsection{Preliminaries}\label{subsec:Notations}

Throughout this paper we let $\mathbf{E}$ be a finite set, with $|\mathbf{E}|=m>1$. We define $\overline{\mathbf{E}}:=\mathbf{E}\cup\{\partial\}$, where $\partial$ denotes the coffin state isolated from $\mathbf{E}$. Let $(\mathsf{\Lambda}_{s})_{s\in\bR_{+}}$, where $\bR_{+}:=[0,\infty)$, be a family of $m\times m$ sub-Markovian generator matrices, i.e., their off-diagonal elements are non-negative, and the entries in their rows sum to a non-positive number. We additionally define $\mathsf{\Lambda}_{\infty}:=\mathsf{0}$, the $m\times m$ matrix with all entries equal to zero.

\medskip
We make the following standing assumption:
\begin{assumption}\label{assump:GenLambda}\mbox{}
There exists an absolute constant $K\in(0,\infty)$, such that $|\mathsf{\Lambda}_{s}(i,j)|\leq K$, for all $i,j\in\mathbf{E}$ and $s\in\bR_{+}$.
\end{assumption}
Let $v:\overline{\mathbf{E}}\rightarrow\bR$ with $v(i)\neq 0$ for any $i\in\mathbf{E}$ and $v(\partial)=0$. 
We will use the following partition of the set $\mathbf{E}$
\begin{align*}
\mathbf{E}_{+}:=\left\{i\in\mathbf{E}:\,v(i)>0\right\}\quad\text{and}\quad\mathbf{E}_{-}:=\left\{i\in\mathbf{E}:\,v(i)<0\right\}.
\end{align*}
We assume that both $\mathbf{E}_{+}$ and $\mathbf{E}_{-}$ are non-empty. Without loss of generality, we also assume that the indices of the first $m_+=|\mathbf{E}_{+}|$ (respectively, last $m_-=|\mathbf{E}_{-}|$) rows and columns of any $m\times m$ matrix correspond to the elements in $\mathbf{E}_{+}$ (respectively, $\mathbf{E}_{-}$).

In what follows we let $\sX:=\bR_{+}\times\mathbf{E}$, and  $\sX_{\pm}:=\bR_{+}\times\mathbf{E}_{\pm}$. The Borel $\sigma$-field on $\sX$ (respectively, $\sX_{\pm}$) is denoted by $\cB(\sX):=\cB(\bR_{+})\otimes 2^{\mathbf{E}}$ (respectively, $\cB(\sX_{\pm}):=\cB(\bR_{+})\otimes 2^{\mathbf{E}_{\pm}}$). Accordingly, we let $\overline{\sX}:=\sX\cup(\infty,\partial)$ (respectively, $\overline{\sX_{\pm}}:=\sX_{\pm}\cup(\infty,\partial)$) be the one-point completion of $\sX$ (respectively, $\sX_{\pm}$), and let $\cB(\overline{\sX}):=\sigma(\cB(\sX)\cup\{(\infty,\partial)\})$ (respectively, $\cB(\overline{\sX_{\pm}}):=\sigma(\cB(\sX_{\pm})\cup\{(\infty,\partial)\})$). A pair $(s,i)\in \sX$ consists of the time variable $s$ and the space variable $i$.

\medskip
We will also use the following notations for various spaces of real-valued functions:
\begin{itemize}
\item $\cB_b(\overline{\sX})$ is the space of $\cB(\overline{\sX})$-measurable, and bounded functions $f$ on $\overline{\sX}$, with $g(\infty,\partial)=0$.
\item $C_{0}(\overline{\sX})$ is the space of functions $f\in \cB_b(\overline{\sX})$ such that $f(\cdot,i)\in C_{0}(\bR_{+})$ for all $i\in\mathbf{E}$, where $C_{0}(\bR_{+})$ is the space {of} functions vanishing at infinity.
\item $C_{c}(\overline{\sX})$ is the space of functions $f\in \cB_b(\overline{\sX})$ such that $f(\cdot,i)\in C_{c}(\bR_{+})$ for all $i\in\mathbf{E}$, where $C_{c}(\bR_{+})$ is the space of functions with compact support.
\end{itemize}

Sometimes $\overline{\sX}$ will be replaced by $\overline{\sX_{+}}$ or $\overline{\sX_{-}}$ when the functions are defined on these spaces, in which case the set $\mathbf{E}$ will be replaced by $\mathbf{E}_{+}$ or $\mathbf{E}_{-}$, respectively, in the above definitions.  Note that each function on $\overline{\sX}$ can be viewed as a time-dependent vector of size $m$, which can be split into a time-dependent vector of size $m_{+}$ (a function on $\sX_{+}$) and a time-dependent vector of size $m_{-}$ (a function on $\sX_{-}$).

\subsection{A time-inhomogeneous Markov family corresponding to  $(\mathsf{\Lambda}_{s})_{s\in\bR_{+}}$ and related passage times}\label{subsec:Markov}

We start by introducing a time-inhomogeneous Markov Family corresponding to sub-Markovian matrix intensity function $(\mathsf{\Lambda}_{s})_{s\in\bR_{+}}$. 
After the introduction of the time-inhomogeneous Markov Family, we proceed with a study of some passage times related to this family.

\subsubsection{A time-inhomogeneous Markov family $\cM$ corresponding to $(\mathsf{\Lambda}_{s})_{s\in\bR_{+}}$} \label{subsubsec:Markov}

We take $\Omega$ as the collection of $\overline{\mathbf{E}}$-valued functions $\omega$ on $\bR_{+}$, and $\sF:=\sigma\{X_{t},\,t\in\bR_{+}\}$, where $X$ is the coordinate mapping $X_\cdot(\omega):=\omega(\cdot)$. Sometimes we may need the value of $\omega\in\Omega$ at infinity, and in such case we set $X_{\infty}(\omega)=\omega(\infty)=\partial$, for any $\omega\in\Omega$. We endow the space $(\Omega,\sF)$ with a family of filtrations $\bF_{s}:=\{\sF^{s}_{t},\,t\in[s,\infty]\}$, $s\in\overline{\bR}_{+}$, where, for $s\in\bR_{+}$,
\begin{align*}
\sF^{s}_{t}:=\bigcap_{r>t}\sigma\left(X_{u},\,\,u\in[s,r]\right),\,\,\,t\in[s,\infty);\quad\sF^{s}_{\infty}:=\sigma\bigg(\bigcup_{t\geq s}\sF^{s}_{t}\bigg),
\end{align*}
and $\sF^{\infty}_{\infty}:=\{\emptyset,\Omega\}$. We denote by
\begin{align*}
\mathcal{M}:=\big\{\big(\Omega,\sF,\bF_{s},(X_t)_{t\in[s,\infty]},\bP_{s,i}\big),\,(s,i)\in\overline{\sX}\big\}
\end{align*}
a canonical {\it time-inhomogeneous} Markov family. That is,
\begin{itemize}
\item $\bP_{s,i}$ is a probability measure on $(\Omega,\sF^{s}_{\infty})$ for $(s,i)\in\overline{\sX}$;
\item the function $P:\overline{\sX}\times\overline{\bR}_{+}\times 2^{\overline{\mathbf{E}}}\rightarrow [0,1]$ defined for $0\leq s\leq t\leq\infty$ as
    \begin{align*}
    P(s,i,t,B):=\bP_{s,i}\!\left(X_t\in B\right)
    \end{align*}
    is measurable with respect to $i$ for any fixed $s\leq t$ and $B\in 2^{\overline{\mathbf{E}}}$;
\item $\bP_{s,i}(X_{s}=i)=1$ for any $(s,i)\in\overline{\sX}$;
\item for any $(s,i)\in\overline{\sX}$, $s\leq t\leq r\leq\infty$, and $B\in 2^{\overline{\mathbf{E}}}$, it holds that
    \begin{align*}
    \bP_{s,i}\!\left(X_r\in B\,|\,\sF^{s}_{t}\right)=\bP_{t,X_{t}}\!\left(X_r\in B\right),\quad\bP_{s,i}-\text{a.s.}\,,
    \end{align*}
\end{itemize}

Let $\mU:=(\mU_{s,t})_{0\leq s\leq t<\infty}$ be the evolution system (cf. \cite{Bottcher2014}) corresponding to $\mathcal{M}$, defined by
\begin{align}\label{eq:DefEvolSytXStar}
\mU_{s,t}f(i):=\bE_{s,i}\left(f(X_{t})\right),\quad 0\leq s\leq t<\infty,\quad i\in\mathbf{E},
\end{align}
for all functions (column vectors) $f:\mathbf{E}\rightarrow\bR$.\footnote{Note that for $t\in\bR_{+}$, $X_{t}$ takes values in $\mathbf{E}$.} We assume that
\begin{align}\label{eq:DefGenXStar}
\lim_{h\downarrow 0}\frac{1}{h}\left(\mU_{s,s+h}f(i)-f(i)\right)=\mathsf{\Lambda}_{s}f(i),\quad\text{for any }\,(s,i)\in\sX,
\end{align}
for all $f:\mathbf{E}\rightarrow\bR$. Without loss of generality,\footnote{We refer to the discussion in \cite[Section 2.2.1]{BieCheCiaGon2020}.} we assume $\mathcal{M}$ is a standard Markov family (cf. \cite[Definition I.6.6]{GikhmanSkorokhod2004}).\footnote{In terminology of \cite[Definition I.6.6]{GikhmanSkorokhod2004}, $\cM$ is called a Markov process instead of a Markov family. } In particular, $\cM$ has {the} strong Markov property. Consequently, each process $(X_t)_{t\in[s,\infty]}$, for $s\in\bR_+$, is a strong Markov process.

\subsubsection{Passage times related to $\cM$}\label{subsubsec:PassageTime}

For all $s\in\overline\bR_+$ and $\omega\in\Omega$, we define an additive functional $\phi_{\cdot}(s)$ as
\begin{align}\label{eq:Defphi}
\phi_{t}(s):=\int_{s}^{t}v(X_{u})\,du,\quad t\in[s,\infty).
\end{align}
Additionally, we define $\phi_\infty(s,\omega):=0$. 
{Moreover}, for any $s\in\overline{\bR}_{+}$ and $\ell\in\bR_{+}$, we define associated passage times
\begin{align}\label{eq:Deftau}
\tau_{\ell}^{+}(s):=\inf\left\{t\in[s,\infty]:\,\phi_{t}(s)>\ell\right\}\quad\text{and}
\quad\tau_{\ell}^{-}(s):=\inf\left\{t\in[s,\infty]:\,\phi_{t}(s)<-\ell\right\}.
\end{align}
Both $\tau_{\ell}^{+}(s)$ and $\tau_{\ell}^{-}(s)$ are $\bF_{s}$-stopping times since, $\phi_{\cdot}(s)$ is $\bF_{s}$-adapted, has continuous sample paths, and $\bF_{s}$ is right-continuous (cf. \cite[Proposition 1.28]{JacodShiryaev2003}). For notational convenience, if no confusion arises, we will omit the parameter $s$ in $\phi_{t}(s)$ and $\tau_{\ell}^{\pm}(s)$.

The following lemma is an immediate consequence of the
definitions above. We refer to \cite[Lemma 2.2]{BieCheCiaGon2020} for the proof.
\begin{lemma}\label{lem:RangeXTaupm}
For any $s\in\overline{\bR}_{+}$, $\ell\in\bR_{+}$ and $\omega\in\Omega$ the following inclusion holds $X_{\tau_{\ell}^{\pm}(s)}(\omega)\in\mathbf{E}_{\pm}\cup\{\partial\}$. In particular, if $\tau_{\ell}^{\pm}(s,\omega)<\infty$, then $X_{\tau_{\ell}^{\pm}(s)}(\omega)\in\mathbf{E}_{\pm}$.
\end{lemma}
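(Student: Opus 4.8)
The plan is to argue pathwise: I fix $s$, $\ell$, and $\omega$, and treat $\tau^+_\ell$ and $\tau^-_\ell$ symmetrically, describing the argument for $\tau:=\tau^+_\ell(s,\omega)$. If $\tau=\infty$, then by the convention $X_\infty(\omega)=\partial$ we get $X_\tau(\omega)=\partial\in\mathbf{E}_+\cup\{\partial\}$ and there is nothing more to prove; so I may assume $\tau<\infty$, and the goal sharpens to $X_\tau(\omega)\in\mathbf{E}_+$.

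First I would record the analytic regularity of $\phi$. Since $v$ is bounded on the finite set $\overline{\mathbf{E}}$, the map $t\mapsto\phi_t(s)$ is Lipschitz, hence continuous. From $\tau=\inf\{t\geq s:\phi_t(s)>\ell\}<\infty$ and continuity I obtain $\phi_\tau(s)=\ell$, together with a sequence $t_n\downarrow\tau$ with $t_n>\tau$ and $\phi_{t_n}(s)>\ell$. Concretely, $\phi_t(s)\leq\ell$ for $t<\tau$ forces $\phi_\tau(s)\leq\ell$ by continuity, while approximating the defining set from the right forces $\phi_\tau(s)\geq\ell$ and supplies the sequence $(t_n)$.

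The crux is to convert ``$\phi$ exceeds $\ell$ immediately after $\tau$'' into a sign condition on $v(X_\tau)$. Here I would invoke the right-continuity of the sample paths, which holds because $\cM$ is a standard Markov family and $\overline{\mathbf{E}}$ is finite: the path $u\mapsto X_u(\omega)$ is a right-continuous step function, so there is $\delta>0$ with $X_u(\omega)=X_\tau(\omega)=:i$ for all $u\in[\tau,\tau+\delta)$. Consequently $\phi_t(s)=\ell+v(i)(t-\tau)$ for $t\in[\tau,\tau+\delta)$. A case check on $v(i)$ then finishes the argument: if $v(i)<0$ then $\phi_t(s)<\ell$ throughout $(\tau,\tau+\delta)$, and if $i=\partial$ (the only state with $v(i)=0$) then $\partial$ is absorbing and $\phi_t(s)\equiv\ell$ on $[\tau,\infty)$; in either case no $t_n\downarrow\tau$ can satisfy $\phi_{t_n}(s)>\ell$, contradicting the second paragraph. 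Hence $v(i)>0$, i.e.\ $X_\tau(\omega)=i\in\mathbf{E}_+$. The argument for $\tau^-_\ell$ is verbatim with $>\ell$, $<-\ell$, and $\mathbf{E}_-$ replacing the corresponding objects.

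The step I expect to be most delicate is the appeal to right-continuity. The statement is phrased for every $\omega\in\Omega$, yet it genuinely fails for arbitrary $\overline{\mathbf{E}}$-valued paths: a path sitting in $\mathbf{E}_-$ at $\tau$ but jumping into $\mathbf{E}_+$ immediately afterward would violate it. The real content is therefore that the admissible paths are right-continuous step functions, which is exactly what standardness of $\cM$ provides, so I would make this dependence explicit rather than calling the conclusion merely ``immediate.'' The boundary case $\ell=0$ with $\tau=s$ needs no separate treatment: the same local expansion $\phi_t(s)=v(i)(t-s)$ on $[s,s+\delta)$ again forces $v(i)>0$.
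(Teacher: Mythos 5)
Your proof is correct and follows essentially the same route as the paper, which itself omits the argument and defers to \cite[Lemma 2.2]{BieCheCiaGon2020}: a pathwise level-crossing argument showing $\phi_{\tau^{\pm}_\ell}(s)=\pm\ell$ by continuity of the Lipschitz functional $\phi$, followed by the local expansion $\phi_t(s)=\pm\ell+v(i)(t-\tau)$ on a right interval where the step-function path is constant, which forces $\mathrm{sgn}(v(i))=\pm1$. Your caveat that the conclusion genuinely requires right-continuity of the sample paths (supplied by the standardness of $\cM$), rather than holding for arbitrary elements of $\Omega$ as the statement's phrasing might suggest, is accurate and well placed.
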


In order to proceed, we introduce the following operators:
\begin{itemize}
\item $J^{+}:\cB_b(\overline{\sX_{+}})\rightarrow \cB_b(\overline{\sX_{-}})$ is defined as
    \begin{align}\label{eq:DefJPlus}
    \big(J^{+}g^{+}\big)(s,i):=\bE_{s,i}\Big(g^{+}\Big(\tau_{0}^{+},X_{\tau_{0}^{+}}\Big)\Big),\quad (s,i)\in\overline{\sX_{-}}.
    \end{align}
    Clearly, for any $g^{+}\in \cB_b(\overline{\sX_{+}})$ it holds that $|(J^{+}g^{+})(s,i)|\leq\|g^{+}\|_{\cB_b(\overline{\sX_{+}})}<\infty$ for any $(s,i)\in\sX_{-}$, and $(J^{+}g^{+})(\infty,\partial)=0$, so that $J^{+}g^{+}\in \cB_b(\overline{\sX_{-}})$.
\item $J^{-}:\cB_b(\overline{\sX_{-}})\rightarrow \cB_b(\overline{\sX_{+}})$ is defined as,
    \begin{align}\label{eq:DefJMinus}
    \big(J^{-}g^{-}\big)(s,i):=\bE_{s,i}\Big(g^{-}\Big(\tau_{0}^{-},X_{\tau_{0}^{-}}\Big)\Big),\quad (s,i)\in\overline{\sX_{+}}.
    \end{align}
\item For any $\ell\in\bR_{+}$, $\cP_{\ell}^{+}:\cB_b(\overline{\sX_{+}})\rightarrow \cB_b(\overline{\sX_{+}})$ is defined as
    \begin{align}\label{eq:DefPPlus}
    \big(\cP^{+}_{\ell}g^{+}\big)(s,i):=\bE_{s,i}\Big(g^{+}\Big(\tau_{\ell}^{+},X_{\tau_{\ell}^{+}}\Big)\Big),\quad (s,i)\in\overline{\sX_{+}}.
    \end{align}
\item For any $\ell\in\bR_{+}$, $\cP_{\ell}^{-}:\cB_b(\overline{\sX_{-}})\rightarrow \cB_b(\overline{\sX_{-}})$ is defined as,
    \begin{align}\label{eq:DefPMinus}
    \big(\cP^{-}_{\ell}g^{-}\big)(s,i):=\bE_{s,i}\Big(g^{-}\Big(\tau_{\ell}^{-},X_{\tau_{\ell}^{-}}\Big)\Big),\quad (s,i)\in\overline{\sX_{-}}.
    \end{align}
\end{itemize}

The proposition below follows from the strong Markov property of $(X_t)_{t\in[s,\infty]}$. We refer to \cite[Section 2.3]{BieCheCiaGon2020} for the proof.
\begin{proposition}\label{prop:ExpgpsimJPPlus}
For $g^{+}\in \cB_b(\overline{\sX_{+}})$, $\ell\in(0,\infty)$, and $(s,i)\in\sX_{-}$, we have
\begin{align}\label{eq:ExpgpsimJPPlus}
\bE_{s,i}\Big(g^{+}\Big(\tau_{\ell}^{+},X_{\tau_{\ell}^{+}}\Big)\Big)=\big(J^{+}\cP_{\ell}^{+}g^{+}\big)(s,i).
\end{align}
Analogously, for $g^-\in\cB_b(\overline{\sX_-})$, $\ell\in(0,\infty)$, and $(s,i)\in\sX_+$, we have
\begin{align*}
\bE_{s,i}\Big(g^{-}\Big(\tau_{\ell}^{-},X_{\tau_{\ell}^{-}}\Big)\Big)=\big(J^{-}\cP_{\ell}^{-}g^{-}\big)(s,i).
\end{align*}
\end{proposition}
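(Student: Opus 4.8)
The plan is to condition on the first time $\tau_0^+$ at which the additive functional $\phi_\cdot(s)$ returns to level $0$, and then to invoke the strong Markov property of $\cM$ at that stopping time. I will carry out the first identity in detail; the second follows by the symmetric argument, exchanging the roles of $\mathbf{E}_+$ and $\mathbf{E}_-$ and of $\tau^+$ and $\tau^-$. So fix $g^+\in\cB_b(\overline{\sX_+})$, $\ell\in(0,\infty)$ and $(s,i)\in\sX_-$.

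First I would establish the pathwise decomposition
\[
\tau_\ell^+(s)=\tau_\ell^+\big(\tau_0^+(s)\big)\qquad\text{on the event }\big\{\tau_0^+(s)<\infty\big\}.
\]
Since $i\in\mathbf{E}_-$, the definition of $\tau_0^+(s)$ gives $\phi_t(s)\le 0<\ell$ for every $t<\tau_0^+(s)$, whence $\tau_\ell^+(s)\ge\tau_0^+(s)$. Continuity of $t\mapsto\phi_t(s)$ forces $\phi_{\tau_0^+(s)}(s)=0$ on $\{\tau_0^+(s)<\infty\}$, so additivity of $\phi$ yields $\phi_t(s)=\phi_t(\tau_0^+(s))$ for all $t\ge\tau_0^+(s)$. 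Taking the infimum over $\{t\ge\tau_0^+(s):\phi_t(\tau_0^+(s))>\ell\}$ then gives the identity, and in particular $X_{\tau_\ell^+(s)}=X_{\tau_\ell^+(\tau_0^+(s))}$.

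Next I would take expectations, splitting according to whether $\tau_0^+(s)$ is finite. On $\{\tau_0^+(s)=\infty\}$ the functional never exceeds $\ell>0$, so $\tau_\ell^+(s)=\infty$ and $g^+\big(\tau_\ell^+(s),X_{\tau_\ell^+(s)}\big)=g^+(\infty,\partial)=0$; this event likewise contributes $0$ to $(J^+\cP_\ell^+g^+)(s,i)$ because $(\cP_\ell^+g^+)(\infty,\partial)=0$. On $\{\tau_0^+(s)<\infty\}$, Lemma \ref{lem:RangeXTaupm} gives $X_{\tau_0^+(s)}\in\mathbf{E}_+$, so $(\tau_0^+(s),X_{\tau_0^+(s)})\in\sX_+$ and is $\sF^s_{\tau_0^+(s)}$-measurable. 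Conditioning on $\sF^s_{\tau_0^+(s)}$ and applying the strong Markov property of $\cM$ at $\tau_0^+(s)$ to the path functional $(\tau_\ell^+(r),X_{\tau_\ell^+(r)})$ evaluated at $r=\tau_0^+(s)$, I would obtain
\[
\bE_{s,i}\Big(g^+\big(\tau_\ell^+(\tau_0^+),X_{\tau_\ell^+(\tau_0^+)}\big)\,\big|\,\sF^s_{\tau_0^+}\Big)=\big(\cP_\ell^+g^+\big)\big(\tau_0^+,X_{\tau_0^+}\big),
\]
where the right-hand side is read off from the definition \eqref{eq:DefPPlus} of $\cP_\ell^+$. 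Taking the outer expectation $\bE_{s,i}$ and recognizing the definition \eqref{eq:DefJPlus} of $J^+$ applied to $\cP_\ell^+g^+\in\cB_b(\overline{\sX_+})$ then gives $\bE_{s,i}(g^+(\tau_\ell^+,X_{\tau_\ell^+}))=(J^+\cP_\ell^+g^+)(s,i)$, as desired.

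The hard part will be the rigorous application of the strong Markov property in the penultimate display. The issue is that the conditioned quantity is not a functional of the post-$\tau_0^+$ trajectory for a \emph{fixed} index: the random time $r=\tau_0^+$ enters simultaneously as the origin of the reset functional $\phi_\cdot(r)$ and as the lower limit in the stopping rule defining $\tau_\ell^+(r)$. Making this precise requires the joint measurability of $(r,\omega)\mapsto\big(\tau_\ell^+(r),X_{\tau_\ell^+(r)}\big)$ together with the strong Markov property in its standard-Markov-family form; these are exactly the ingredients developed in \cite[Section 2.3]{BieCheCiaGon2020}, which I would cite rather than reprove. The remaining ingredients — continuity and additivity of $\phi$, and the vanishing of the boundary terms at $(\infty,\partial)$ — are routine.
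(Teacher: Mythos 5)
Your proof is correct and follows essentially the same route as the paper, which establishes this proposition precisely by applying the strong Markov property at $\tau_0^+$ and refers to \cite[Section 2.3]{BieCheCiaGon2020} for the technical details (joint measurability and the strong Markov step at a random initial time) that you also identify as the delicate point and defer to that reference. Your preliminary pathwise reductions --- $\tau_\ell^+(s)=\tau_\ell^+(\tau_0^+(s))$ on $\{\tau_0^+(s)<\infty\}$ via continuity and additivity of $\phi$, and the vanishing of both sides on $\{\tau_0^+(s)=\infty\}$ --- are all sound.
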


\begin{remark}\label{rmk:WH}
Let $C_{0}^{1}(\overline{\sX})$ be the space of functions $f\in C_{0}(\overline{\sX})$ such that, for any $i\in\mathbf{E}$, $\partial f(\cdot,i)/\partial s$ exists and belongs to $C_{0}(\bR_{+})$. \cite{BieCheCiaGon2020} shows that, if we assume additionally that $s\mapsto\mathsf\Lambda_s$ is continuous, then $J^\pm$ and $\cP^\pm$ can be uniquely characterized by certain operator equation. More precisely: Define $\mV:=\textnormal{diag}\,\{v(i):i\in\mathbf{E}\}$, $\wt{\mathsf\Lambda}g(s,i):=[\Lambda_s g(s, \cdot)]_i$ and consider the following equation in unknown $(S^+,H^+,S^-,H^-)$
\begin{align}\label{eq:WH}
\mV^{-1}\bigg(\frac{\partial}{\partial s}+\wt{\mathsf{\Lambda}}\bigg) \begin{pmatrix} I^{+} & S^{-} \\ S^{+} & I^{-} \end{pmatrix} \begin{pmatrix} g^{+} \\ g^{-} \end{pmatrix} = \begin{pmatrix} I^{+} & S^{-} \\ S^{+} & I^{-} \end{pmatrix} \begin{pmatrix} H^{+} & \mathsf{0} \\ \mathsf{0} & -H^{-} \end{pmatrix} \begin{pmatrix} g^{+} \\ g^{-} \end{pmatrix},\quad g^{\pm}\in C_{0}^{1}(\overline{\sX_{\pm}}),
\end{align}
subject to the conditions below:
\begin{itemize}
\item [$(\textnormal{a}^{\pm})$] $S^{\pm}:C_{0}(\overline{\sX_{\pm}})\rightarrow C_{0}(\overline{\sX_{\mp}})$ is a bounded operator such that
    \begin{itemize}
    \item[(i)] for any $g^{\pm}\in C_{c}(\overline{\sX_{\pm}})$ with $\supp g^{\pm}\subset[0,\eta_{g^{\pm}}]\times\mathbf{E}_{\pm}$ for some constant $\eta_{g^{\pm}}\in(0,\infty)$, we have $\supp S^{\pm}g^{\pm}\subset[0,\eta_{g^{\pm}}]\times\mathbf{E}_{\mp}$;
    \item[(ii)] for any $g^{\pm}\in C_{0}^{1}(\overline{\sX_{\pm}})$, we have $S^{\pm}g^{\pm}\in C_{0}^{1}(\overline{\sX_{\mp}})$.
    \end{itemize}
\item [$(\textnormal{b}^{\pm})$] $H^{\pm}$ is the strong generator of a strongly continuous positive contraction semigroup $(\cQ_{\ell}^{\pm})_{\ell\in\bR_{+}}$ on $C_{0}(\overline{\sX_{\pm}})$ with domain $\sD(H^{\pm})=C_{0}^{1}(\overline{\sX_{\pm}})$.
\end{itemize}
Then, \eqref{eq:WH} has a unique solution. Moreover, restricted to $C_{0}(\overline{\sX_{\pm}})$, we have $J^\pm=S^\pm$. In addition, $(\cQ^\pm_\ell)_{\ell\in\bR^+}$ are strongly continuous positive contraction semigroups with generators $H^\pm$ and $\cP^\pm_\ell=Q^\pm_\ell$. 
\end{remark}

\section{Main Result}\label{sec:MainResults}

Note that $\phi_0(s)=0$. To facilitate the investigation of the exit time of $(\phi_t(s))_{t\ge s}$ from interval $[-\ell^-,\ell^+]$, where $\ell^\pm\ge 0$, we define
\begin{align*}
\xi^{+}_{\ell^-,\ell^+}(s):=\inf\set{t\in[s,\tau^{-}_{\ell^-}(s)): \phi_t(s)>\ell^+} = \1_{\big\{\tau^{+}_{\ell^+}(s) < \tau^{-}_{\ell^-}(s)\big\}} \tau^{+}_{\ell^+}(s) + \1_{\big\{\tau^{+}_{\ell^+}(s) \ge \tau^{-}_{\ell^-}(s)\big\}}\cdot\infty,
\end{align*}
and
\begin{align*}
\xi^{-}_{\ell^-,\ell^+}(s):=\inf\set{t\in[s,\tau^{+}_{\ell^+}(s)): \phi_t(s)<-\ell^-} = \1_{\big\{\tau^{-}_{\ell^-}(s) < \tau^{+}_{\ell^+}(s)\big\}} \tau^{-}_{\ell^-}(s) + \1_{\big\{\tau^{-}_{\ell^-}(s) \ge \tau^{+}_{\ell^+}(s)\big\}}\cdot\infty,
\end{align*}
where we adopted the usual convention that $\inf\emptyset=\infty$. Note that for a fixed $\omega$ at least one of $\xi^{+}_{\ell^-,\ell^+}(s,\omega)$ and $\xi^{-}_{\ell^-,\ell^+}(s, \omega)$ equals to $\infty$.  Clearly,
\begin{align}\label{eq:XxiRange}
X_{\xi^{+}_{\ell^-,\ell^+}(s)}\in\mathbf{E}_+\cup\set{\partial}\quad\text{{and}}\quad X_{\xi^{-}_{\ell^-,\ell^+}(s)}\in\mathbf{E}_-\cup\set{\partial}.
\end{align}
We define $\Xi^{+}_{\ell^-,\ell^+}:\cB_b(\overline{\sX_+})\to \cB_b(\overline\sX)$ as
\begin{align}\label{eq:DefXiplus}
{\Big(\Xi^{+}_{\ell^-,\ell^+}g^+\Big)(s,i)} := \bE_{s,i}\left(g^+\left(\xi^{+}_{\ell^-,\ell^+}(s),X_{\xi^{+}_{\ell^-,\ell^+}(s)}\right)\right),\quad {(s,i)\in\overline\sX,}
\end{align}
{where $g^+\in\cB_b(\overline{\sX_+})$.} Note that we consider $g^+\in \cB_b(\overline{\sX_+})$ instead of $g\in \cB_b(\overline{\sX})$ due to \eqref{eq:XxiRange}. 
Similarly, we define $\Xi^{-}_{\ell^-,\ell^+}:\cB_b(\overline{\sX_-})\to \cB_b(\overline\sX)$ as
\begin{align}\label{eq:DefXiminus}
{\Big(\Xi^{-}_{\ell^-,\ell^+}g^-\Big)(s,i)} := \bE_{s,i}\left(g^-\left(\xi^{-}_{\ell^-,\ell^+}(s), X_{\xi^{-}_{\ell^-,\ell^+}(s)}\right)\right),\quad {(s,i)\in\overline\sX,}
\end{align}
{where $g^-\in\cB_b(\overline{\sX_-})$.}

Note that events $\set{\xi^{+}_{\ell^-,\ell^+}(s)<\infty}$, {$\set{\xi^{-}_{\ell^-,\ell^+}(s)<\infty}$, and $\set{\xi^{+}_{\ell^-,\ell^+}(s)=\xi^{-}_{\ell^-,\ell^+}(s)=\infty}$} forms a partition for $\Omega$. It follows that
\begin{align}\label{eq:DecomptauMin}
\!\tau^{+}_{\ell^+}(s)\!\wedge\!\tau^{-}_{\ell^-}(s)\!=\!\1_{\set{\xi^{+}_{\ell^-,\ell^+}(s)<\infty}} \xi^{+}_{\ell^-,\ell^+}(s) \!+\! \1_{\set{\xi^{-}_{\ell^-,\ell^+}(s)<\infty}}\xi^{-}_{\ell^-,\ell^+}(s)\!+\!{\1_{\set{\xi^{+}_{\ell^-,\ell^+}(s)=\xi^{-}_{\ell^-,\ell^+}(s)=\infty}}}\!\cdot\!\infty.
\end{align}
For any $g\in \cB_b(\overline\sX)$, if we let $g^\pm\in \cB_b(\overline{\sX_\pm})$ satisfy {$g^\pm(s,i)=g(s,i)$ for $(s,i)\in\sX_\pm$}, then by \eqref{eq:XxiRange} and \eqref{eq:DecomptauMin}, {we have}
\begin{align}\label{eq:XipmSum}
&\bE_{s,i}\left(g\left(\tau^{-}_{\ell^-}\wedge\tau^{+}_{\ell^+}, X_{\tau^{-}_{\ell^-}\wedge\tau^{+}_{\ell^+}}\right)\right) = {\left(\Xi^{+}_{\ell^-,\ell^+}g^+\right)(s,i) + \left(\Xi^{-}_{\ell^-,\ell^+}g^-\right)(s,i)}.
\end{align}
Recall that the left hand side of \eqref{eq:XipmSum} is the expectation showing in \eqref{eq:MainProblem}. We therefore focus on computing $\Xi^{+}_{\ell^-,\ell^+}$ and $\Xi^{-}_{\ell^-,\ell^+}$.

In what follows we say that $g^\pm\in\cB_b(\overline{\sX_{\pm}})$ decays exponentially fast to zero if there are constants $C,c>0$ such that $\max_{i\in\mathbf{E}_\pm}|g^\pm(s,i)| \le C e^{-cs}$ for $s\in\bR_+$. {The proofs of results below are presented in Section \ref{sec:Proofs}.} We begin with the following lemma.
\begin{lemma}\label{lem:NeumannSum}
Suppose {that} $g^+\in \cB_b(\overline{\sX_+})$ decays exponentially fast to zero. {Then for any $\ell^+,\ell^-\in\bR_+$,
\begin{align}\label{eq:SeriesgPlus}
\sum_{n=1}^{\infty}\left(J^{-}\cP^-_{\ell^{-}+\ell^{+}}J^{+}\cP^{+}_{\ell^{-}+\ell^{+}}\right)^n g^+
\end{align}
converges in $\|\cdot\|_\infty$}. Analogously, suppose {that} $g^-\in \cB_b(\overline{\sX_-})$ decays exponentially fast to zero. {Then for any $\ell^+,\ell^-\in\bR_+$,
\begin{align}\label{eq:SeriesgMinus}
\sum_{n=1}^{\infty}\left(J^{+}\cP^+_{\ell^{-}+\ell^{+}}J^{-}\cP^{-}_{\ell^{-}+\ell^{+}}\right)^n g^-
\end{align}
converges in $\|\cdot\|_\infty$.}
\end{lemma}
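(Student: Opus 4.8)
The plan is to give a probabilistic interpretation of the operator $T:=J^{-}\cP^{-}_{L}J^{+}\cP^{+}_{L}$, where $L:=\ell^{-}+\ell^{+}$, acting on $\cB_b(\overline{\sX_+})$, and then to exploit the exponential decay of $g^+$ together with a lower bound on the relevant passage times to obtain a geometric bound on $\|T^n g^+\|_\infty$. The series \eqref{eq:SeriesgMinus} will then follow by the symmetric argument with the roles of $\pm$ interchanged.

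First I would set up, on a single trajectory started from $(s,i)\in\sX_+$, the alternating sequence of passage times $\sigma_0:=s$, $\sigma_{2k-1}:=\tau^{-}_{L}(\sigma_{2k-2})$ and $\sigma_{2k}:=\tau^{+}_{L}(\sigma_{2k-1})$ for $k\ge 1$, each computed from the restarted additive functional. Using Proposition \ref{prop:ExpgpsimJPPlus} to collapse $J^{+}\cP^{+}_{L}$ and $J^{-}\cP^{-}_{L}$ into single expectations, together with the strong Markov property of $\cM$ applied at $\sigma_1,\sigma_2,\dots$ and an induction on $n$, I would establish the representation
\begin{align*}
(T^n g^+)(s,i)=\bE_{s,i}\Big(g^+\big(\sigma_{2n},X_{\sigma_{2n}}\big)\Big),\qquad (s,i)\in\sX_+,
\end{align*}
where the convention $g^+(\infty,\partial)=0$ takes care of trajectories on which some intermediate passage time is infinite.

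Next I would bound $\sigma_{2n}$ from below. By Lemma \ref{lem:RangeXTaupm} the states $X_{\sigma_{2k-1}}$ and $X_{\sigma_{2k}}$ lie in $\mathbf{E}_-$ and $\mathbf{E}_+$ respectively, so along $\sigma_0<\sigma_1<\dots<\sigma_{2n}$ the velocity changes sign $2n$ times; since the additive functional is monotone between consecutive jumps of $X$, each sign change forces at least one jump, whence $X$ performs at least $2n$ jumps on $(s,\sigma_{2n}]$ on the event $\{\sigma_{2n}<\infty\}$. Because Assumption \ref{assump:GenLambda} bounds every exit rate $-\mathsf\Lambda_u(j,j)$ by $K$, the successive holding times stochastically dominate independent $\mathrm{Exp}(K)$ variables, so $\sigma_{2n}-s$ stochastically dominates a $\mathrm{Gamma}(2n,K)$ random variable. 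Combining this with $|g^+(t,j)|\le Ce^{-ct}$ gives
\begin{align*}
\big|(T^n g^+)(s,i)\big|\le C\,\bE_{s,i}\Big(e^{-c\sigma_{2n}}\1_{\{\sigma_{2n}<\infty\}}\Big)\le C\,e^{-cs}\Big(\tfrac{K}{K+c}\Big)^{2n}\le C\Big(\tfrac{K}{K+c}\Big)^{2n}.
\end{align*}
Since $K/(K+c)<1$, the bounds $\|T^n g^+\|_\infty\le C(K/(K+c))^{2n}$ are summable, and completeness of $(\cB_b(\overline{\sX_+}),\|\cdot\|_\infty)$ yields convergence of \eqref{eq:SeriesgPlus}.

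The step I expect to be the main obstacle is the rigorous justification of the representation for $T^n g^+$: one must verify that the restarted passage times concatenate correctly along a single trajectory and that the strong Markov property may be applied at each $\sigma_j$, paying attention to measurability and to the degenerate cases where an intermediate passage time is infinite, in which the trajectory is absorbed and the coffin-state convention must be invoked consistently. The quantitative estimate is comparatively routine, although the stochastic domination of the $2n$-th jump time in the time-inhomogeneous setting should be stated with care; I would also note that when $L>0$ one may bypass the jump-counting argument entirely, since then $\sigma_{2n}-s\ge 2nL/\max_{i\in\mathbf{E}}|v(i)|$ holds deterministically, giving the geometric bound directly.
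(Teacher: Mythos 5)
Your proposal is correct, and it reaches the conclusion by a genuinely different route from the paper, even though both arguments rest on the same two physical facts: each application of $J^{\pm}$ forces at least one jump of $X$ (because $v$ changes sign between the starting state and the state at the passage time, cf.\ Lemma \ref{lem:RangeXTaupm}), and Assumption \ref{assump:GenLambda} caps the jump intensity by $K$, so that waiting for jumps costs exponential time which the decay of $g^{+}$ converts into a geometric factor. The paper never forms an $n$-fold probabilistic representation of $(J^{-}\cP^{-}_{\ell^{-}+\ell^{+}}J^{+}\cP^{+}_{\ell^{-}+\ell^{+}})^{n}g^{+}$: instead it proves a \emph{one-step} pointwise bound (Lemma \ref{lem:EstJPJPk}) for indicator data $g^{+}=\1_{[0,T]}$, approximates $e^{-ct}$ from below by dyadic step functions, passes to the limit by monotone convergence to get $(J^{-}\cP^{-}_{\ell^{-}+\ell^{+}}J^{+}\cP^{+}_{\ell^{-}+\ell^{+}})g^{+}\le C_{K,c}\,g^{+}$ with $C_{K,c}=\int_{0}^{1}(1-(1-x)^{K/c})^{2}dx<1$, and then iterates using only the linearity and positivity of the operator — thereby needing the strong Markov composition only once, through Proposition \ref{prop:ExpgpsimJPPlus}. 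Your route instead identifies $T^{n}g^{+}$ with $\bE_{s,i}(g^{+}(\sigma_{2n},X_{\sigma_{2n}}))$ for the alternating passage times and bounds the Laplace transform of the $2n$-th jump time by $(K/(K+c))^{2n}$ via stochastic domination by a $\mathrm{Gamma}(2n,K)$ law; this is more direct (no step-function approximation) and in fact gives a slightly sharper constant, since by Cauchy--Schwarz $(K/(K+c))^{2}\le C_{K,c}$. The price is that you must rigorously justify the iterated concatenation of restarted passage times and the repeated application of the strong Markov property in the time-inhomogeneous setting — exactly the obstacle you flag — whereas the paper confines all such delicacies to the single composition in Proposition \ref{prop:ExpgpsimJPPlus} (handled through the auxiliary time-homogeneous family of Section \ref{subsec:TimeHomogen}). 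A hybrid that would shorten your write-up: prove only your $n=1$ estimate, observe that it shows $Tg^{+}$ again decays exponentially at rate $c$ with constant multiplied by $(K/(K+c))^{2}$, and iterate as the paper does; your deterministic lower bound $\sigma_{2n}-s\ge 2n(\ell^{-}+\ell^{+})\|v\|_{\infty}^{-1}$ for $\ell^{-}+\ell^{+}>0$ is the same observation the paper records in Remark \ref{rmk:Invertibility}, but note it cannot replace the jump-counting argument in the boundary case $\ell^{-}=\ell^{+}=0$, which the lemma must cover.
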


The ensuing theorem is our main result. 
\begin{theorem}\label{thm:TwoSidedExit}
For $g^\pm\in \cB_b(\overline{\sX_\pm})$ decaying exponentially fast to zero, {for any $\ell^+,\ell^-\in\bR_+$} and $(s,i)\in\sX$, we have
\begin{align}\label{eq:Psiplus}
{\left(\Xi^{+}_{\ell^-,\ell^+}g^+\right)\!(s,i)} \!&=\! \!\left[\!\left(\!\begin{pmatrix}I^+\\ J^+\end{pmatrix}\!\cP^{+}_{\ell^+}\!-\!\begin{pmatrix}J^-\\ I^-\end{pmatrix}\!\cP^{-}_{\ell^-} J^+\cP^+_{\ell^{-}+\ell^{+}}\!\right)\!\sum_{n=0}^\infty (J^-\cP^-_{\ell^{-}+\ell^{+}}J^+\cP^+_{\ell^{-}+\ell^{+}})^n g^+\right]\!(s,i),\\
\label{eq:Psiminus}
{\left(\Xi^{-}_{\ell^-,\ell^+}g^-\right)\!(s,i)} \!&=\!\! \left[\!\left(\!\begin{pmatrix}J^-\\ I^-\end{pmatrix}\!\cP^{-}_{\ell^-}  \!-\! \begin{pmatrix}J^+\\ I^+\end{pmatrix}\!\cP^{+}_{\ell^+} J^-\cP^-_{\ell^{+}+\ell^{-}}\!\right)\!\sum_{n=0}^\infty (J^+\cP^+_{\ell^{-}+\ell^{+}}J^-\cP^-_{\ell^{-}+\ell^{+}})^n g^-\right]\!(s,i).
\end{align}
\end{theorem}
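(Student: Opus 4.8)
The plan is to reduce the two barrier-crossing operators $\Xi^{\pm}_{\ell^-,\ell^+}$ to a coupled pair of linear operator equations by conditioning the \emph{unconstrained} one-sided passage on which barrier is hit first, and then to solve that system by a Neumann-series inversion whose convergence is exactly what Lemma \ref{lem:NeumannSum} provides. Writing $L:=\ell^-+\ell^+$, I first note that by \eqref{eq:DefJPlus}, \eqref{eq:DefPPlus} and Proposition \ref{prop:ExpgpsimJPPlus}, for every $(s,i)\in\sX$ the block operator computes the unconstrained first passage to the upper level,
\[
\left[\begin{pmatrix}I^+\\ J^+\end{pmatrix}\cP^{+}_{\ell^+}g^+\right](s,i)=\bE_{s,i}\Big(g^+\big(\tau^+_{\ell^+},X_{\tau^+_{\ell^+}}\big)\Big),
\]
the top block handling $i\in\mathbf{E}_+$ directly and the bottom block handling $i\in\mathbf{E}_-$ through $J^+$. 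Since $\tau^+_{\ell^+}\neq\tau^-_{\ell^-}$ unless both are infinite, and since $g^+(\infty,\partial)=0$, I can split this unconstrained passage according to which level is reached first:
\[
\begin{pmatrix}I^+\\ J^+\end{pmatrix}\cP^{+}_{\ell^+}g^+=\Xi^{+}_{\ell^-,\ell^+}g^+ + \bE_{\cdot}\Big(g^+\big(\tau^+_{\ell^+},X_{\tau^+_{\ell^+}}\big)\1_{\{\tau^-_{\ell^-}<\tau^+_{\ell^+}\}}\Big),
\]
using that $\Xi^{+}_{\ell^-,\ell^+}g^+(s,i)=\bE_{s,i}\big(g^+(\tau^+_{\ell^+},X_{\tau^+_{\ell^+}})\1_{\{\tau^+_{\ell^+}<\tau^-_{\ell^-}\}}\big)$ by \eqref{eq:DefXiplus} and \eqref{eq:XxiRange}.

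Next I would apply the strong Markov property at $\tau^-_{\ell^-}$ to the residual term. On $\{\tau^-_{\ell^-}<\tau^+_{\ell^+}\}$ we have $X_{\tau^-_{\ell^-}}\in\mathbf{E}_-$ and $\phi_{\tau^-_{\ell^-}}=-\ell^-$, so the remaining passage to $\ell^+$ is a first passage to the \emph{net} level $L$ starting from the minus region, which by Proposition \ref{prop:ExpgpsimJPPlus} is represented by $J^+\cP^+_{L}$. Consequently the residual term equals $\bE_{\cdot}\big((J^+\cP^+_{L}g^+)(\tau^-_{\ell^-},X_{\tau^-_{\ell^-}})\1_{\{\tau^-_{\ell^-}<\tau^+_{\ell^+}\}}\big)=\Xi^{-}_{\ell^-,\ell^+}(J^+\cP^+_{L}g^+)$ by \eqref{eq:DefXiminus}, giving the identity (valid for every bounded $g^+$)
\[
\Xi^{+}_{\ell^-,\ell^+}g^+=\begin{pmatrix}I^+\\ J^+\end{pmatrix}\cP^{+}_{\ell^+}g^+-\Xi^{-}_{\ell^-,\ell^+}\big(J^+\cP^+_{L}g^+\big),
\]
together with its mirror image $\Xi^{-}_{\ell^-,\ell^+}g^-=\begin{pmatrix}J^-\\ I^-\end{pmatrix}\cP^{-}_{\ell^-}g^- -\Xi^{+}_{\ell^-,\ell^+}\big(J^-\cP^-_{L}g^-\big)$.

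I would then substitute the second identity, applied to $g^-=J^+\cP^+_{L}g^+$, into the first to eliminate $\Xi^-$. Setting $A:=J^-\cP^-_{L}J^+\cP^+_{L}$, this collapses to the fixed-point relation
\[
\Xi^{+}_{\ell^-,\ell^+}(I-A)g^+=\left[\begin{pmatrix}I^+\\ J^+\end{pmatrix}\cP^{+}_{\ell^+}-\begin{pmatrix}J^-\\ I^-\end{pmatrix}\cP^{-}_{\ell^-}J^+\cP^+_{L}\right]g^+,
\]
again for all bounded $g^+$. Finally, for $g^+$ decaying exponentially I set $h^+:=\sum_{n=0}^\infty A^n g^+$, which converges in $\|\cdot\|_\infty$ (hence is bounded) by Lemma \ref{lem:NeumannSum}; since each of $J^\pm,\cP^\pm_\ell$ is a sup-norm contraction, $A$ is bounded, so $A^{N+1}g^+\to0$ and $(I-A)h^+=\lim_N\big(g^+-A^{N+1}g^+\big)=g^+$. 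Applying the fixed-point relation with the bounded input $h^+$ then yields exactly \eqref{eq:Psiplus}, and \eqref{eq:Psiminus} follows by the symmetric argument.

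The main obstacle is the inversion in the last step. Because $I-A$ acts on an infinite-dimensional function space I cannot invoke abstract invertibility; instead the argument must be run through concrete bounded functions, exploiting that the fixed-point relation holds for \emph{all} bounded inputs (in particular for $h^+$), that Lemma \ref{lem:NeumannSum} supplies Neumann convergence precisely in the exponentially-decaying class, and that contractivity of $A$ licenses the telescoping $(I-A)\sum_{n=0}^N A^n g^+=g^+-A^{N+1}g^+$ and the passage to the limit. A secondary care-point is the strong-Markov bookkeeping in the second step: verifying $\{\tau^-_{\ell^-}<\tau^+_{\ell^+}\}\in\sF^{s}_{\tau^-_{\ell^-}}$, that the residual passage indeed corresponds to the net level $L$ under time-inhomogeneity so that Proposition \ref{prop:ExpgpsimJPPlus} applies at the random start time $\tau^-_{\ell^-}$, and that paths never reaching $\ell^+$ drop out because $g^+(\infty,\partial)=0$.
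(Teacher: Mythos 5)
Your proposal is correct and takes essentially the same route as the paper: the coupled identities you derive are exactly the paper's \eqref{eq:JPplusDecomp} and \eqref{eq:JPminusDecomp}, and the elimination of $\Xi^-$ followed by the Neumann-series inversion justified by Lemma \ref{lem:NeumannSum} is precisely the paper's closing argument. The one point you flag but do not execute --- the strong-Markov bookkeeping at the random time $\tau^-_{\ell^-}$ under time-inhomogeneity, where the level of $\phi$ must be absorbed into the state --- is handled in the paper by passing to the auxiliary space-time-level homogeneous family $\wt{\cM}$ of Section \ref{subsec:TimeHomogen} and proving Lemma \ref{lem:tauhat}, rather than by a direct application to the original chain.
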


\begin{remark}\label{rmk:Invertibility}
If one of $J^+$, $J^-$, $\cP^+_{\ell^-+\ell^+}$ or $\cP^-_{\ell^-+\ell^+}$ has an operator norm $\|\cdot\|_\infty$ that is strictly less than 1, then $\sum_{n=0}^\infty (J^-\cP^-_{\ell^{-}+\ell^{+}}J^+\cP^+_{\ell^{-}+\ell^{+}})^n$ converges in operator norm $\|\cdot\|_\infty$ to $(I^+-J^-\cP^-_{\ell^{-}+\ell^{+}}J^+\cP^+_{\ell^{-}+\ell^{+}})^{-1}$, and further allows removing the condition that $g^\pm$ decays exponentially fast to zero from the statement of Theorem \ref{thm:TwoSidedExit}. The proof of this modification of Theorem \ref{thm:TwoSidedExit} is similar to the proof in Section \ref{subsec:ProofThm}, and will be omitted. A sufficient condition for $\|\cP^\pm_{\ell^-+\ell^+}\|_\infty < 1$ is that there is a $c>0$ such that each row of $\mathsf{\Lambda}_s$ sums up to a number less than $-c$ for all $s\in\bR_+$. Indeed, this introduces an exponential killing at the rate of at least $c$ to $(X_t)_{t\in[s,\infty]}$, for all $s\in\bR_+$. Consequently, $\|\cP^\pm_{\ell^-+\ell^+}\|_\infty \le 1- \exp(-c(\ell^-+\ell^+)\|v\|_\infty^{-1})$, {since} for $\ell>0$ and $s\in\bR_+$ we have $\tau^\pm_{\ell}(s)-s\ge \ell\|v\|_\infty^{-1}$.
\end{remark}

\begin{remark}
In this remark we relate \eqref{eq:Psiplus} to \cite[Proposition 1]{JiangPistorius2008} without additive Brownian noise by letting the Markov family $\cM$ be time-homogeneous, namely, for all $s\in\bR_{+}$,  $\mathsf{\Lambda}_{s}=\mathsf{\Lambda}_0=:\mathsf{\Lambda}$.
For simplicity, we assume that there is a $c>0$ such that each row of $\mathsf{\Lambda}$ sums up to a number smaller than $-c$.
We define $\zeta(s):=\inf\set{t\ge s: X_t=\partial}$; we will omit $s$ in $\zeta(s)$ when no confusion arises. In this case, observe that {
\begin{align*}
\bP_{s,i}\Big(X_{\tau^\pm_\ell(s)} = k\Big) = \bP_{s,i}\Big(\tau^\pm_\ell(s)<\zeta(s), X_{\tau^\pm_\ell(s)} = k\Big) = \bP_{0,i}\Big(\tau^\pm_\ell(0)<\zeta(0), X_{\tau^\pm_\ell(0)} = k\Big)
\end{align*}
for any $(s,i)\in\sX$, $k\in\mathbf E_\pm$,} and $\ell\in\bR_+$. Moreover, it is shown in \cite[Theorem 2, (24)]{JiangPistorius2008} (see also \cite{BarlowRogersWilliams1980}, \cite[Remark 3.5]{BieCheCiaGon2020}) that there are sub-Markovian matrices $\mQ^+$ and $\mQ^-$ on $\mathbf E_+$ and $\mathbf E_-$, respectively, such that
\begin{gather*}
{\bP_{0,i}\Big(\tau^+_\ell<\zeta, X_{\tau^+_\ell} = k\Big) = \big[e^{\ell\mQ^+} \big]_{ik}, \quad (i,k)\in\mathbf E_+^2,\quad\ell\in\bR_+,}\\
{\bP_{0,i}\Big(\tau^-_\ell<\zeta, X_{\tau^-_\ell} = k\Big) = \big[e^{\ell\mQ^-} \big]_{ik}, \quad (i,k)\in\mathbf E_-^2,\quad\ell\in\bR_+.}
\end{gather*}
We define $\mJ^+\in\bR^{|\mathbf{E}_-|\times|\mathbf{E}_+|}$ and $\mJ^-\in\bR^{|\mathbf{E}_+|\times|\mathbf{E}_-|}$ as
\begin{gather*}
\mathsf J^+_{ik} := {\bP_{0,i}\Big(\tau^+_0<\zeta, X_{\tau^+_0} = k\Big)},\quad (i,k)\in\mathbf E_-\times\mathbf E_+,\\
\mathsf J^-_{ik} := {\bP_{0,i}\Big(\tau^-_0<\zeta, X_{\tau^-_0} = k\Big)},\quad (i,k)\in\mathbf E_+\times\mathbf E_-.
\end{gather*}
Then, for any $g^+$ satisfying $g^+(s,i) = g^+(0,i)$ for all $(s,i)\in {\sX}_+$, setting $\mathsf g^+(\cdot):=g^+(0,\cdot)$, we have
\begin{gather*}
{\big(J^+g^+\big)(s,i)} = \sum_{k\in\mathbf{E}_+}{\bE_{s,i}\bigg(\1_{\set{\tau^+_0<\zeta, X_{\tau^+_0}=k}} \mathsf g^+(k)\bigg)} = [\mJ^+ \mathsf g^+]_i,\quad (s,i)\in\sX^-,\\
{\big(\cP^+_\ell g^+\big)(s,i)} = \sum_{k\in\mathbf{E}_+}{\bE_{s,i}\bigg(\1_{\set{\tau^+_\ell<\zeta, X_{\tau^+_\ell}=k}} \mathsf g^+(k)\bigg) = [e^{\ell\mQ^+} \mathsf g^+]_i,\quad (s,i)\in\sX^+,\quad\ell\ge 0.}
\end{gather*}
In view of the exponential killing, with similar reasoning as in Remark \ref{rmk:Invertibility}, we have
$\|e^{(\ell^++\ell^-)\mQ^\pm}\|_\infty\le e^{-c(\ell^++\ell^-)\|v\|_\infty^{-1}}$. It follows that
\begin{align*}
\sum_{n=0}^\infty {\left(\Big(J^-\cP^-_{\ell^{-}+\ell^{+}}J^+\cP^+_{\ell^{-}+\ell^{+}}\Big)^ng^+\right)(0,\cdot)} &= \sum_{n=0}^\infty \left(\mJ^-e^{(\ell^-+\ell^+)\mQ^-}\mJ^+e^{(\ell^-+\ell^+)\mQ^+}\right)^n \mathsf g^+ \\
&= \left(\mI-\mJ^-e^{(\ell^-+\ell^+)\mQ^-}\mJ^+e^{(\ell^-+\ell^+)\mQ^+}\right)^{-1} \mathsf g^+.
\end{align*}
Consequently, for any $(s,i)\in\sX$, the right hand side of \eqref{eq:Psiplus} becomes
\begin{align*}
\left[\left(\begin{pmatrix}\mI^+\\ \mJ^+\end{pmatrix}e^{\ell^+\mQ^+}  - \begin{pmatrix}\mJ^-\\ \mI^-\end{pmatrix}e^{\ell^-\mQ^-} \mJ^+e^{(\ell^-+\ell^+)\mQ^+}\right)\left(\mI-\mJ^-e^{(\ell^-+\ell^+)\mQ^-}\mJ^+e^{(\ell^-+\ell^+)\mQ^+}\right)^{-1} \mathsf g^+\right]_i,
\end{align*}
which shows that \eqref{eq:Psiplus} as a special case of \cite[Proposition 1, equation (31)]{JiangPistorius2008} without additive Brownian noise. For \eqref{eq:Psiminus}, the sanity check can be carried out analogously.
\end{remark}

The knowledge of $\Xi^\pm_{\ell^-,\ell^+}$ can also be used to study the law of $X_T$ before the exit time, that is the law of $X_T$ restricted to the set $\tau^{-}_{\ell^-}\wedge\tau^{+}_{\ell^+}> T$. Observe that
\begin{align}
\bE_{s,i}\left(h(X_T)\1_{\left\{\tau^{-}_{\ell^-}\wedge\tau^{+}_{\ell^+}> T\right\}}\right) = \bE_{s,i}(h(X_T)) - \bE_{s,i}\left(h(X_T)\1_{\left\{\tau^{-}_{\ell^-}\wedge\tau^{+}_{\ell^+}\le T\right\}}\right),
\end{align}
it is sufficient to investigate the second term of the right hand side. The proposition below represents the desired quantity in terms of $\Xi^\pm$.
\begin{proposition}\label{prop:AfterExit}
For any $h:\mathbf E\to\bR$ and $\ell^\pm\in\bR_+$, we have
\begin{align*}
\bE_{s,i}\left(h(X_T)\1_{\left\{\tau^{-}_{\ell^-}\wedge\tau^{+}_{\ell^+}\le T\right\}}\right) = {\Big(\Xi^{+}_{\ell^-,\ell^+}k^+\Big)(s,i)+\Big(\Xi^{-}_{\ell^-,\ell^+}k^-\Big)(s,i)},
\end{align*}
where
\begin{align}\label{eq:SpecialDefgplus}
k^\pm(t,j) := \1_{[0,T]}(t)\,{\bE_{t,j}\big(h(X_{T})\big)},\quad (t,j)\in\sX_\pm.
\end{align}
\end{proposition}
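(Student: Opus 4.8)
The plan is to condition on the two-sided exit time via the strong Markov property and then split the result according to which side the exit occurs on. Throughout, suppress the fixed indices $\ell^{\pm}$ and the argument $s$, writing $\xi^{\pm}$ for $\xi^{\pm}_{\ell^-,\ell^+}(s)$, and set $\sigma:=\tau^{-}_{\ell^-}(s)\wedge\tau^{+}_{\ell^+}(s)$ for the exit time. Put $\psi(t,j):=\bE_{t,j}(h(X_T))$, defined for $(t,j)\in\sX$ with $t\le T$. Since $T$ is deterministic, $\set{\sigma\le T}\in\sF^s_\sigma$, and on this event $h(X_T)$ is a functional of the post-$\sigma$ trajectory. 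Invoking the strong Markov property of $\cM$ at the stopping time $\sigma$ gives
\[
\bE_{s,i}\big(h(X_T)\,\1_{\set{\sigma\le T}}\,\big|\,\sF^s_\sigma\big)=\1_{\set{\sigma\le T}}\,\bE_{\sigma,X_\sigma}(h(X_T))=\1_{\set{\sigma\le T}}\,\psi(\sigma,X_\sigma),
\]
and taking $\bE_{s,i}$-expectations reduces the quantity in the proposition to $\bE_{s,i}(\1_{\set{\sigma\le T}}\,\psi(\sigma,X_\sigma))$.

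Next I would use the decomposition \eqref{eq:DecomptauMin}. On $\set{\sigma\le T}$ one has $\sigma<\infty$, so exactly one of $\xi^{+}$ and $\xi^{-}$ is finite and equals $\sigma$, while the other is $\infty$; moreover $X_{\xi^{\pm}}=\partial$ whenever $\xi^{\pm}=\infty$. This lets me rewrite
\[
\1_{\set{\sigma\le T}}\,\psi(\sigma,X_\sigma)=\1_{\set{\xi^{+}\le T}}\,\psi(\xi^{+},X_{\xi^{+}})+\1_{\set{\xi^{-}\le T}}\,\psi(\xi^{-},X_{\xi^{-}}),
\]
where each term is understood to vanish when the corresponding first passage time is infinite.

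Finally I would identify each summand with an application of $\Xi^{\pm}_{\ell^-,\ell^+}$. By \eqref{eq:XxiRange}, on $\set{\xi^{+}<\infty}$ we have $X_{\xi^{+}}\in\mathbf{E}_+$, so the argument $(\xi^{+},X_{\xi^{+}})$ lies in $\sX_+$; moreover $\1_{\set{\xi^{+}\le T}}=\1_{[0,T]}(\xi^{+})$ annihilates the contribution of $\set{\xi^{+}=\infty}$, where the pair degenerates to $(\infty,\partial)$. Hence $\1_{\set{\xi^{+}\le T}}\,\psi(\xi^{+},X_{\xi^{+}})=k^{+}(\xi^{+},X_{\xi^{+}})$ with $k^{+}$ as in \eqref{eq:SpecialDefgplus} and the convention $k^{+}(\infty,\partial)=0$, so that $k^{+}\in\cB_b(\overline{\sX_+})$. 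Taking expectations and applying \eqref{eq:DefXiplus} gives $(\Xi^{+}_{\ell^-,\ell^+}k^{+})(s,i)$; the minus side is handled symmetrically via \eqref{eq:DefXiminus}. Summing the two contributions yields the claimed identity.

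The step I expect to be the main obstacle is the clean invocation of the time-inhomogeneous strong Markov property at $\sigma$: one must verify that $\set{\sigma\le T}\in\sF^s_\sigma$, that $h(X_T)\1_{\set{\sigma\le T}}$ is a measurable functional of the shifted path so that the restart identity $\bE_{s,i}(\,\cdot\mid\sF^s_\sigma)=\bE_{\sigma,X_\sigma}(\,\cdot\,)$ applies on $\set{\sigma<\infty}$, and that the bookkeeping at the coffin state $\partial$ and at time $\infty$ is consistent with the standing conventions (for instance $f(\infty,\partial)=0$ for $f\in\cB_b(\overline{\sX_\pm})$). Once these measurability and boundary issues are settled, the remaining algebra is routine.
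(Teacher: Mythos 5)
Your proposal is correct and follows essentially the same route as the paper: apply the Markov property at the exit time to replace $h(X_T)$ by $\bE_{t,j}(h(X_T))$ evaluated at $(\tau^{-}_{\ell^-}\wedge\tau^{+}_{\ell^+},X_{\tau^{-}_{\ell^-}\wedge\tau^{+}_{\ell^+}})$, then invoke the decomposition \eqref{eq:XipmSum} with $g=k^{+}+k^{-}$. The only cosmetic difference is that the paper conditions on $\sF_{\tau^{-}_{\ell^-}\wedge\tau^{+}_{\ell^+}\wedge T}$ (a stopping time bounded by $T$, which makes the appeal to the Markov property at a stopping time cleaner), whereas you condition on $\sF^{s}_{\sigma}$ and restrict to $\{\sigma\le T\}$ — an equivalent bookkeeping choice.
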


\section{Proofs}\label{sec:Proofs}

\subsection{Proof of Lemma \ref{lem:NeumannSum}}\label{subsec:ProofNeumannSum}
The proof of Lemma \ref{lem:NeumannSum} relies on the following inequality.
\begin{lemma}\label{lem:EstJPJPk}
Suppose {that} $g^+(t,j)=\1_{[0,T]}(t)$ for some $T\in\bR_+$. Then, for any $(s,i)\in\sX_+$ and {$\ell^\pm\in\bR_+$,} we have
\begin{align*}
{\Big(J^-\cP^-_{\ell^{-}+\ell^{+}}J^+\cP^+_{\ell^{-}+\ell^{+}} g^+\Big)(s,i)} \le \1_{[0,T]}(s)\;\left(1 - e^{-K(T-s)}\right)^{2}.
\end{align*}
\end{lemma}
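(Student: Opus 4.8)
The plan is to give the operator product a probabilistic reading and then reduce the bound to a single first-jump estimate for $X$ that is controlled by the uniform generator bound $K$. Throughout write $\ell:=\ell^-+\ell^+$ and $p^+:=J^+\cP^+_\ell g^+$. Since $g^+(t,j)=\1_{[0,T]}(t)$ and $g^+(\infty,\partial)=0$, Proposition \ref{prop:ExpgpsimJPPlus} gives, for $(s,i)\in\sX_-$, that $p^+(s,i)=\bE_{s,i}(g^+(\tau^+_\ell,X_{\tau^+_\ell}))=\bP_{s,i}(\tau^+_\ell\le T)$; in particular $p^+(s',\cdot)\equiv 0$ for $s'>T$. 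Applying the ``$-$'' part of Proposition \ref{prop:ExpgpsimJPPlus} with $g^-=p^+\in\cB_b(\overline{\sX_-})$ then yields, for $(s,i)\in\sX_+$,
\[
\big(J^-\cP^-_\ell J^+\cP^+_\ell g^+\big)(s,i)=\bE_{s,i}\big(p^+(\tau^-_\ell,X_{\tau^-_\ell})\big)=\bE_{s,i}\big(\1_{\{\tau^-_\ell\le T\}}\,p^+(\tau^-_\ell,X_{\tau^-_\ell})\big),
\]
where the last step uses $p^+(s',\cdot)\equiv0$ for $s'>T$ together with $p^+(\infty,\partial)=0$. On $\{\tau^-_\ell\le T\}$ we have $\tau^-_\ell<\infty$, so $X_{\tau^-_\ell}\in\mathbf{E}_-$ by Lemma \ref{lem:RangeXTaupm} and $(\tau^-_\ell,X_{\tau^-_\ell})\in\sX_-$. (Since Proposition \ref{prop:ExpgpsimJPPlus} is stated for $\ell>0$, the degenerate case $\ell=0$, where $\cP^\pm_0$ acts as the identity on $\sX_\pm$, is treated directly from the definitions of $J^\pm$; the resulting expression is the same.)

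Next I would establish the one estimate that does the real work. Let $\sigma$ be the first time $X$ leaves its initial state. Because the total rate at which $X$ leaves state $j$ at time $r$ equals $-\mathsf{\Lambda}_r(j,j)=|\mathsf{\Lambda}_r(j,j)|\le K$ by Assumption \ref{assump:GenLambda}, the holding-time survival probability obeys $\bP_{s,j}(\sigma>t)=\exp\!\big(\int_s^t\mathsf{\Lambda}_r(j,j)\,dr\big)\ge e^{-K(t-s)}$, whence $\bP_{s,j}(\sigma\le t)\le 1-e^{-K(t-s)}$ (only this lower bound on survival is needed, and it is where Assumption \ref{assump:GenLambda} enters). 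Now if $j\in\mathbf{E}_+$ then $\phi_\cdot(s)$ is strictly increasing on $[s,\sigma]$, hence positive on $(s,\sigma]$, so it cannot reach $-\ell\le0$ before $\sigma$; thus $\tau^-_\ell(s)>\sigma$ and, for $(s,j)\in\sX_+$ with $s\le T$,
\[
\bP_{s,j}(\tau^-_\ell\le T)\le \bP_{s,j}(\sigma< T)\le 1-e^{-K(T-s)} .
\]
Symmetrically, if $j\in\mathbf{E}_-$ then $\phi_\cdot(s)$ is strictly decreasing on $[s,\sigma]$, so $\tau^+_\ell(s)>\sigma$ and $p^+(s,j)=\bP_{s,j}(\tau^+_\ell\le T)\le 1-e^{-K(T-s)}$ for $(s,j)\in\sX_-$ with $s\le T$.

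Finally I would combine the two pieces. Fix $(s,i)\in\sX_+$ with $s\le T$. Bounding $p^+$ at the random point $(\tau^-_\ell,X_{\tau^-_\ell})\in\sX_-$ by the second estimate, and using that $u\mapsto1-e^{-K(T-u)}$ is nonincreasing together with $\tau^-_\ell\ge s$,
\[
p^+(\tau^-_\ell,X_{\tau^-_\ell})\le 1-e^{-K(T-\tau^-_\ell)}\le 1-e^{-K(T-s)}\qquad\text{on }\{\tau^-_\ell\le T\}.
\]
Substituting this into the identity from the first paragraph and then invoking the first estimate for $\bP_{s,i}(\tau^-_\ell\le T)$ gives
\[
\big(J^-\cP^-_\ell J^+\cP^+_\ell g^+\big)(s,i)\le\big(1-e^{-K(T-s)}\big)\,\bP_{s,i}(\tau^-_\ell\le T)\le\big(1-e^{-K(T-s)}\big)^2 .
\]
For $s>T$ we have $\tau^-_\ell(s)\ge s>T$, so the indicator $\1_{\{\tau^-_\ell\le T\}}$ vanishes and the left side is $0$, matching $\1_{[0,T]}(s)=0$; hence the asserted bound holds for all $(s,i)\in\sX_+$.

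The main obstacle is the probabilistic identification in the first paragraph, namely correctly unwinding the four-fold operator composition onto the right state spaces $\sX_\pm$, with the coffin-state convention $p^+(\infty,\partial)=0$ and the degenerate $\ell=0$ case, together with the holding-time survival estimate. Once these are in place, the square of $1-e^{-K(T-s)}$ arises for free from the two independent ``must-jump'' phases (the descent to $-\ell$ and the subsequent ascent by $\ell$), combined with the elementary monotonicity bound.
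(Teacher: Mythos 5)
Your proof is correct and follows essentially the same route as the paper's: both arguments derive the bound by applying the first-jump (holding-time) estimate $\bP_{s,j}(\sigma\le T)\le 1-e^{-K(T-s)}$ once for the upcrossing started in $\mathbf{E}_-$ and once for the downcrossing started in $\mathbf{E}_+$, combined with the monotonicity of $u\mapsto 1-e^{-K(T-u)}$. The only difference is organizational: the paper peels off the four operators one at a time (bounding $\cP^+_{\ell^-+\ell^+}g^+\le g^+$, then $J^+$, then $\cP^-_{\ell^-+\ell^+}$ via a supremum, then $J^-$), whereas you collapse each pair $J^\pm\cP^\pm_{\ell^-+\ell^+}$ into a single passage-time expectation via Proposition \ref{prop:ExpgpsimJPPlus}, which is slightly cleaner and also makes the $\ell^-+\ell^+=0$ edge case explicit.
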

\begin{proof}
To start with, we let $\gamma(s):=\inf\set{t\ge s: X_t\neq X_s}$. Clearly, $\bP_{s,i}(\gamma(s)\le T)=0$ for $s>T$.
It is known that (cf. \cite[Section 8.4.2, (8.4.21)]{RolskiSchmidliSchmidtTeugels1999})
\begin{align}\label{eq:ProbFirstJump}
{\bP_{s,i}\big(\gamma(s)\le T\big)} = 1-\exp\left(-\int_{s}^T\mathsf{\Lambda}_u(i,i)du\right) \le 1 - e^{-K(T-s)},\quad s\le T,
\end{align}
where we {used} Assumption \ref{assump:GenLambda}-(i) for the inequality. It follows from Lemma \ref{lem:RangeXTaupm} and \eqref{eq:ProbFirstJump} that
\begin{align}\label{eq:Probtau0Est}
{\bP_{s,i}\big(\tau_0^{\pm}(s)\le T\big) \le \bP_{s,i}\big(\gamma(s)\le T\big)} \le 1 - e^{-K(T-s)},\quad s\le T, i\in\mathbf{E}_\mp.
\end{align}
Next, observe that $\tau^\pm_\ell(s)\ge s$. By \eqref{eq:DefPPlus}, we have
\begin{align*}
{\Big(\cP^+_{\ell^{-}+\ell^{+}}g^+\Big)(s,i) = \bP_{s,i}\big(\tau^+_{\ell^-+\ell^+} \le T\big)} \le g^+(s,i)
\end{align*}
Then, for $(s,i)\in\sX_-$, by \eqref{eq:DefJPlus} and \eqref{eq:Probtau0Est},  we have
\begin{align}\label{eq:EstJplusPplusgplus}
0 \le {\Big(J^+\cP^+_{\ell^{-}+\ell^{+}}g^+\Big)(s,i) \le \big(J^+g^+\big)(s,i) = \bP_{s,i}\big(\tau_0^{\pm}(s)\le T\big) \le \1_{[0,T]}(s)\Big(1 - e^{-K(T-s)}\Big)}.
\end{align}
It follows from \eqref{eq:DefPMinus} and \eqref{eq:EstJplusPplusgplus} that, for $(s,i)\in\sX_-$,
\begin{align}\label{eq:EstJminusPminusJplusPplus}
0 \le {\Big(\cP^-_{\ell^{-}+\ell^{+}}J^+\cP^+_{\ell^{-}+\ell^{+}}g^+\Big)(s,i)} \le \sup_{r\ge s}\1_{[0,T]}(r){\Big(1 - e^{-K(T-r)}\Big)} \le {\1_{[0,T]}(s)\left(1 - e^{-K(T-s)}\right)}.
\end{align}
Finally, by \eqref{eq:DefJMinus}, \eqref{eq:EstJminusPminusJplusPplus} and \eqref{eq:Probtau0Est},
\begin{align*}
{\Big(J^-\cP^-_{\ell^{-}+\ell^{+}}J^+\cP^+_{\ell^{-}+\ell^{+}}g^+\Big)(s,i)} & \le {\bE_{s,i}\left(\1_{[0,T]}(\tau_{0}^{-})\sup_{r\ge s,\,i\in\mathbf{E}_-}\Big(\cP^-_{\ell^{-}+\ell^{+}}J^+\cP^+_{\ell^{-}+\ell^{+}}g^+\Big)(r,i)\right)}\\
& \le {\1_{[0,T]}(s)\left(1 - e^{-K(T-s)}\right)^2},
\end{align*}
which completes the proof.
\end{proof}

\begin{proof}[Proof of Lemma \ref{lem:NeumannSum}]
{We will only present the proof for the convergence of \eqref{eq:SeriesgPlus} as the proof for the convergence of \eqref{eq:SeriesgMinus} follows analogously.} Without loss of generality, we assume {that} $g^+(t,i) = e^{-ct}$ for some $c>0$ and define $g^{+}_{k}(s,i):={\sum_{j=1}^{2^k-1}\1_{[0,T_{j}^{k}]}(s)/2^{k}}$, where $T_{j}^k := \inf\{t\ge 0:e^{-ct} \le 1-j2^{-k}\} = c^{-1} {\ln(2^k/(2^k-j))}$ for $j=1,\dots,2^k-1$. Therefore, by Lemma \ref{lem:EstJPJPk}, {
\begin{align*}
\left(\!\Big(J^{-}\cP^-_{\ell^{-}+\ell^{+}}J^{+}\cP^{+}_{\ell^{-}+\ell^{+}}\!\Big)g^+_k\right)\!(0,i)\leq\frac{1}{2^k}\!\sum_{j=1}^{2^k-1}\!\left(\!1\!-\!\bigg(\!1\!-\!\frac{j}{2^k}\!\bigg)^{K/c}\right)^{2} \!\!\le\!\int_{0}^{1}\!\big(1\!-\!(1\!-\!x)^{K/c}\big)^{2}\!\dif x =: C_{K,c} < 1.
\end{align*}
Since} $g^{+}_{k}$ is non-decreasing in $k\in\bN$ and $\lim_{k\to\infty}g^{+}_{k}(s,i) = g^{+}(s,i)$ for $(s,i)\in\sX_{+}$, by \eqref{eq:DefPPlus} and monotone convergence, we have {that} $\cP^+_{\ell^{-}+\ell^{+}} g^+_k$ is non-decreasing {in $k\in\bN$ and that
\begin{align*}
\lim_{k\to\infty}\Big(\cP^{+}_{\ell^{-}+\ell^{+}}g^{+}_{k}\Big)(s,i)=\Big(\cP^{+}_{\ell^{-}+\ell^{+}}g^{+}\Big)(s,i),\quad (s,i)\in\sX_{+}.
\end{align*}
It follows from \eqref{eq:DefJPlus} and monotone convergence that
\begin{align*}
\lim_{k\to\infty}\Big(J^+\cP^{+}_{\ell^{-}+\ell^{+}}g^{+}_{k}\Big)(s,i) = \Big(J^+\cP^{+}_{\ell^{-}+\ell^{+}}g^{+}\Big)(s,i),\quad (s,i)\in\sX_{-}.
\end{align*}
By a similar reasoning we deduce that}
\begin{align*}
{\left(\Big(J^{-}\cP^-_{\ell^{-}+\ell^{+}}J^{+}\cP^{+}_{\ell^{-}+\ell^{+}}\Big)g^+\right)(0,i)}  = \lim_{k\to\infty} {\left(\Big(J^{-}\cP^-_{\ell^{-}+\ell^{+}}J^{+}\cP^{+}_{\ell^{-}+\ell^{+}}\Big)g^+_k\right)(0,i)} \le  C_{K,c}.
\end{align*}
Note that {
\begin{align*}
\left(\Big(J^{-}\cP^-_{\ell^{-}+\ell^{+}}J^{+}\cP^{+}_{\ell^{-}+\ell^{+}}\Big)g^+\right)(s,i) = e^{-cs}\left(\Big(J^{-}\cP^-_{\ell^{-}+\ell^{+}}J^{+}\cP^{+}_{\ell^{-}+\ell^{+}}\Big)g^{+,s}\right)(s,i),
\end{align*}
where $g^{+,s}(t,j):= e^{-c(t-s)}$} for $t\ge s$ and $j\in\mathbb E$. {A similar argument as before shows that}
\begin{align*}
{\left(\Big(J^{-}\cP^-_{\ell^{-}+\ell^{+}}J^{+}\cP^{+}_{\ell^{-}+\ell^{+}}\Big)g^+\right)(s,i)} \le C_{K,c}\,e^{-cs} = C_{K,c}\,g^+(s,i).
\end{align*}
Invoking the linearity and {nonnegativity} of $J^-\cP^-_{\ell^{-}+\ell^{+}}J^+\cP^+_{\ell^{-}+\ell^{+}}$, by iteration, we {have
\begin{align*}
\left(\Big(J^{-}\cP^-_{\ell^{-}+\ell^{+}}J^{+}\cP^{+}_{\ell^{-}+\ell^{+}}\Big)^ng^+\right)(s,i) \le C_{K,c}^{n}\,g^+(s,i),\quad\text{for any }\,n\in\bN.
\end{align*}
Finally, we obtain that}
\begin{align*}
\left\|\sum_{n=1}^{\infty}{\Big(J^{-}\cP^-_{\ell^{-}+\ell^{+}}J^{+}\cP^{+}_{\ell^{-}+\ell^{+}}\Big)^n g^+}\right\|_\infty \le \sum_{n=1}^{\infty}\left\| {\Big(J^{-}\cP^-_{\ell^{-}+\ell^{+}}J^{+}\cP^{+}_{\ell^{-}+\ell^{+}}\Big)^n g^+}\right\|_\infty \le \sum_{n=1}^\infty C^n_{K,c}\,\|g^+\|_\infty < \infty,
\end{align*}
from which the strong convergence follows immediately.
\end{proof}

\subsection{Auxiliary Markov families}\label{subsec:TimeHomogen}
In this subsection, we introduce an  auxiliary time-inhomogenous Markov family $\wh{\cM}$ and an auxiliary time-homogenous Markov family $\widetilde \cM$. Most of the technical details presented in this section, except for those involving two-sided exit times, are similar to \cite[Section 4.1]{BieCheCiaGon2020}.

We start by introducing some more notations of spaces and $\sigma$-fields. Let $\sY:=\overline{\mathbf{E}}\times\bR$, and the Borel $\sigma$-field on $\sY$ is denoted by $\cB(\sY):=2^{\mathbf{E}}\otimes\cB(\bR)$. Accordingly, let $\overline{\sY}:=\sY\cup\{(\partial,\infty)\}$ be the one-point completion of $\sY$, and $\cB(\overline{\sY}):=\sigma(\cB(\sY)\cup\set{(\partial,\infty)})$. Moreover, we set $\sZ:=\bR_{+}\times\sY=\sX\times\bR$ and $\overline{\sZ}:=\sZ\cup\{(\infty,\partial,\infty)\}$.

Let $\wh{\Omega}$ be the set of c\`{a}dl\`{a}g functions $\wh{\omega}$ on $\bR_{+}$ taking values in $\sY$. We define $\wh{\omega}(\infty):=(\partial,\infty)$ for every $\wh{\omega}\in\wh{\Omega}$. As shown in \cite[Appendix A]{BieCheCiaGon2020}, one can construct a {\it standard} canonical time-inhomogeneous Markov family (cf. \cite[Definition I.6.6]{GikhmanSkorokhod2004})
\begin{align*}
\wh{\cM}:=\big\{\big(\wh{\Omega},\wh{\sF},\wh{\bF}_{s},(\wh{X}_{t},\wh{\varphi}_{t})_{t\in[s,\infty]},\wh{\bP}_{s,(i,a)}\big),\,(s,i,a)\in\overline{\sZ}\big\}
\end{align*}
with transition function $\widehat P$ given by
\begin{align}\label{eq:DefTranProbXvarphi}
\wh{P}(s,(i,a),t,A):=\bP_{s,i}\bigg(\bigg(X_{t},\,a+\int_{s}^{t}v\big(X_{u}\big)\,du\bigg)\in A\bigg),
\end{align}
where $(s,i,a)\in\overline{\sZ}$, $t\in[s,\infty]$, and $A\in\cB(\overline{\sY})$. Note above $\widehat{X}_t$ takes values in $\overline{\mathbf E}$ and $\widehat{\varphi}_t$ takes values in $\bR$.

The lemma below reveals the probabilistic relationship between $\wh{\cM}$ and $\cM$. The proof can be found in \cite[Lemma A3]{BieCheCiaGon2020}.
\begin{lemma}\label{lem:XvarphiPathst}
For any $0\leq s\leq t<\infty$, let $\wh{\Omega}_{s,t}$ be the collection of all c\`{a}dl\`{a}g functions on $[s,t]$ taking values in $\sY$. Let $\wh{\sG}_{s,t}$ be the cylindrical $\sigma$-field on $\wh{\Omega}_{s,t}$ generated by $(\wh{X}_{u},\wh{\varphi}_{u})_{u\in[s,t]}$. Then, for any $a\in\bR$ and $C\in\wh{\sG}_{s,t}$,
\begin{align}\label{eq:XvarphiPathst}
\wh{\bP}_{s,(i,a)}\big(\big(\wh{X}_{r},\wh{\varphi}_{r}\big)_{r\in[s,t]}\in C\big)=\bP_{s,i}\bigg(\bigg(X_{r},\,a+\int_{s}^{r}v\big(X_{u}\big)\,du\bigg)_{r\in[s,t]}\in C\bigg).
\end{align}
\end{lemma}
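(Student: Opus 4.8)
The plan is to establish \eqref{eq:XvarphiPathst} first on finite-dimensional cylinder sets and then extend it to all of $\wh{\sG}_{s,t}$ by uniqueness of measures. Fix $s\le t$, $i\in\overline{\mathbf E}$ and $a\in\bR$, and let $\cC$ denote the collection of sets in $\wh{\sG}_{s,t}$ of the form $C=\{\eta\in\wh\Omega_{s,t}:\,(\eta(u_1),\dots,\eta(u_n))\in A_1\times\cdots\times A_n\}$, for finitely many times $s\le u_1<\cdots<u_n\le t$ and $A_1,\dots,A_n\in\cB(\overline{\sY})$. Since $\cC$ is a $\pi$-system generating $\wh{\sG}_{s,t}$ and contains $\wh\Omega_{s,t}$, and since $\wh{\bP}_{s,(i,a)}$ and the $\bP_{s,i}$-pushforward of the map $r\mapsto(X_r,a+\phi_r(s))$ are both genuine probability measures on $(\wh\Omega_{s,t},\wh{\sG}_{s,t})$ (here I use that $v(\partial)=0$, so at finite times the augmented value stays in $\sY$ and never hits $(\partial,\infty)$), it suffices by Dynkin's $\pi$--$\lambda$ theorem to verify \eqref{eq:XvarphiPathst} for $C\in\cC$.

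For such a $C$, write the integration variables as $y_k=(j_k,b_k)\in\sY$. Because $\wh\cM$ is a standard Markov family with transition function $\wh P$, the left-hand side of \eqref{eq:XvarphiPathst} equals the iterated integral
\begin{align*}
\int_{A_1}\cdots\int_{A_n}\wh P\big(s,(i,a),u_1,\dif y_1\big)\,\wh P\big(u_1,y_1,u_2,\dif y_2\big)\cdots\wh P\big(u_{n-1},y_{n-1},u_n,\dif y_n\big).
\end{align*}
For the right-hand side, set $u_0:=s$, $a_0:=a$, and along a path of $\cM$ define $a_k:=a+\phi_{u_k}(s)$; the additivity $\phi_{u_k}(s)=\phi_{u_{k-1}}(s)+\phi_{u_k}(u_{k-1})$ gives the one-step recursion $a_k=a_{k-1}+\phi_{u_k}(u_{k-1})$. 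The event in question is $\bigcap_{k=1}^n\{(X_{u_k},a_k)\in A_k\}$, whose $\bP_{s,i}$-probability I would evaluate by successive conditioning from $k=n$ down to $k=1$.

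The core of the argument is the recursion step. Conditioning on $\sF^{s}_{u_{n-1}}$, the accumulated value $a_{n-1}=a+\phi_{u_{n-1}}(s)$ is $\sF^{s}_{u_{n-1}}$-measurable, whereas the pair $\big(X_{u_n},\phi_{u_n}(u_{n-1})\big)$ is a functional of the path after $u_{n-1}$; by the (ordinary) Markov property of $\cM$ its conditional law given $\sF^{s}_{u_{n-1}}$ is the $\bP_{u_{n-1},X_{u_{n-1}}}$-law, so the tower property yields
\begin{align*}
\bE_{s,i}\Big(\1_{A_n}\big(X_{u_n},a_n\big)\,\Big|\,\sF^{s}_{u_{n-1}}\Big)=\bP_{u_{n-1},X_{u_{n-1}}}\Big(\big(X_{u_n},\,a_{n-1}+\phi_{u_n}(u_{n-1})\big)\in A_n\Big)=\wh P\big(u_{n-1},(X_{u_{n-1}},a_{n-1}),u_n,A_n\big),
\end{align*}
where the last equality is exactly the definition \eqref{eq:DefTranProbXvarphi} of $\wh P$. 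Folding this $(X_{u_{n-1}},a_{n-1})$-measurable factor into the next conditioning on $\sF^{s}_{u_{n-2}}$ and iterating downward reassembles the right-hand side into precisely the iterated integral obtained for the left-hand side. This proves the identity on $\cC$, and hence on $\wh{\sG}_{s,t}$.

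The main obstacle is the bookkeeping in this recursion: the running integral $a_{k-1}$ is path-dependent and is \emph{not} a function of the skeleton $X_{u_0},\dots,X_{u_{k-1}}$ alone, so the increments $\phi_{u_k}(u_{k-1})$ are not conditionally independent given the skeleton and cannot be integrated out directly. It is the successive conditioning via the tower property, combined with the additive decomposition $a_k=a_{k-1}+\phi_{u_k}(u_{k-1})$, that makes the increments align with the one-step kernels $\wh P$. A secondary point requiring care is the uniqueness step, where one must confirm that both set functions are bona fide probability measures on $\wh{\sG}_{s,t}$ with the coffin state handled consistently, so that agreement on the generating $\pi$-system $\cC$ forces agreement throughout.
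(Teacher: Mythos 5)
Your argument is correct and is the standard one: reduce to finite-dimensional rectangle cylinders by a $\pi$--$\lambda$ (uniqueness of measures) argument, express the left-hand side as the iterated integral of the kernels $\wh P$, and recover the same integral on the right-hand side by conditioning successively on $\sF^{s}_{u_{n-1}},\dots,\sF^{s}_{u_{0}}$, using the additivity $a_k=a_{k-1}+\phi_{u_k}(u_{k-1})$ together with the definition \eqref{eq:DefTranProbXvarphi}. The paper does not reproduce a proof but defers to \cite[Lemma A3]{BieCheCiaGon2020}, which proceeds along exactly these lines, so your attempt matches the intended argument.
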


As an immediate consequence of Lemma \ref{lem:XvarphiPathst}, $\widehat\cM$ has the following properties:
\begin{itemize}
\item[(i)] for any $(s,i,a)\in\overline{\sZ}$,
    \begin{align}\label{eq:LawXXStar}
    \text{the law of }\,\wh{X}\,\,\text{under }\,\wh{\bP}_{s,(i,a)}\,\,=\,\,\text{the law of }\,X\,\,\text{under }\,\bP_{s,i};
    \end{align}
\item[(ii)] for any $(s,i,a)\in\sZ$,
    \begin{align}\label{eq:DistvarphiInt}
    \wh{\bP}_{s,(i,a)}\bigg(\wh{\varphi}_{t}=a+\int_{s}^{t}v(\wh{X}_{u})\,du,\,\,\,\,\text{for all $t\in[s,\infty)$}\bigg)=1.
    \end{align}
\end{itemize}

Considering the standard Markov family $\wh{\cM}$, for any $s,\ell\in\bR$ with $-\ell^-\le\ell^+$, we define the following auxiliary first passage time
\begin{align*}
\wh{\tau}_{\ell}^{+}(s):=\inf\big\{t\in[s,\infty):\,\wh{\varphi}_{t}>\ell\big\},\quad\wh{\tau}_{\ell}^{-}(s):=\inf\big\{t\in[s,\infty):\,\wh{\varphi}_{t}<-\ell\big\}.
\end{align*}
For any $s,\ell^\pm\in\bR$ with $-\ell^-\le\ell^+$, we also define the two-sided exit time
\begin{align*}
\wh{\xi}_{\ell^-,\ell^+}^{+}(s):=\inf\big\{t\in{\big[s,\wh{\tau}^{-}_{\ell^-}(s)\big)}:\wh{\varphi}_{t}>\ell^+\big\},\quad \wh{\xi}_{\ell^-,\ell^+}^{-}(s):=\inf\big\{t\in{\big[s,\wh{\tau}^{+}_{\ell^+}(s)\big)}:\wh{\varphi}_{t}<-\ell^-\big\},
\end{align*}
The above are $\wh{\bF}_{s}$-stopping times in light of the continuity of $\wh{\varphi}$ and the right-continuity of the filtration $\wh{\bF}_{s}$. If no confusion arises, we will omit the $s$ in $\wh{\tau}^{\pm}_{\ell}(s)$ and $\wh{\xi}_{\ell^-,\ell^+}^{\pm}(s)$.

By \eqref{eq:DistvarphiInt} and \eqref{eq:LawXXStar}, for $a\le\ell$, we have
\begin{align}\label{eq:ExpPStarExpP}
\wh{\bE}_{s,(i,a)}\Big(g^{+}\Big(\wh{\tau}_{\ell}^{+},\wh{X}_{\wh{\tau}_{\ell}^{+}}\Big)\!\Big)&=\wh{\bE}_{s,(i,a)}\bigg(g^{+}\bigg(\!\inf\bigg\{u\!\geq\!s:a+\!\!\int_{s}^{u}\!v(\wh{X}_{r})dr\!>\!\ell\bigg\},\wh{X}_{\inf\big\{u\geq s:\,a+\int_{s}^{u}v(\wh{X}_{r})dr>\ell\big\}}\bigg)\!\bigg)\nonumber\\
&=\bE_{s,i}\bigg(g^{+}\bigg(\!\inf\bigg\{u\geq s:\!\int_{s}^{u}\!v\big(X_{r}\big)dr>\ell-a\bigg\},X_{\inf\big\{u\geq s:\int_{s}^{u}v(X_{r})dr>\ell-a\big\}}\bigg)\!\bigg)\nonumber\\
&=\bE_{s,i}\bigg(g^{+}\bigg(\tau_{\ell-a}^{+},\,X_{\tau_{\ell-a}^{+}}\bigg)\bigg).
\end{align}
Similarly, for $a,\ell^\pm\in\bR$ satisfying $-\ell^-\le a \le \ell^+$,
\begin{align}\label{eq:ExpPStarExpPXi}
\wh{\bE}_{s,(i,a)}\Big(g^{+}\Big(\wh{\xi}_{\ell^-,\ell^+}^{+},\wh{X}_{\wh{\xi}_{\ell^-,\ell^+}^{+}}\Big)\Big)=\bE_{s,i}\Big(g^{+}\Big(\xi_{\ell^-+a,\ell^+-a}^{+},X_{\xi_{\ell^-+a,\ell^+-a}^{+}}\Big)\Big).
\end{align}



Equalities \eqref{eq:ExpPStarExpP} and \eqref{eq:ExpPStarExpPXi} provide useful representations of the expectation under $\bP$. We will need still another representation of this expectation. Towards this end, we will first transform the time-inhomogeneous Markov family $\cM$ into a {\it time-homogeneous} Markov family
\begin{align*}
\wt{\cM}=\big\{\big(\wt{\Omega},\wt{\sF},\wt{\bF},({Z}_t)_{t\in\overline{\bR}_{+}},(\theta_{r})_{r\in\bR_{+}},\wt{\bP}_{z}\big),z\in\overline{\sZ}\big\}
\end{align*}
following the setup in \cite{Bottcher2014}.  The construction of $\wt{\cM}$ proceeds as follows.
\begin{itemize}
\item We let $\wt{\Omega}:=\overline{\bR}_{+}\times\Omega$ to be the new sample space, with elements $\wt{\omega}=(s,\wh{\omega})$, where $s\in\overline{\bR}_{+}$ and $\wh{\omega}\in\wh{\Omega}$. On $\wt{\Omega}$ we consider the $\sigma$-field
    \begin{align*}
    \wt{\sF}:=\Big\{\wt{A}\subset\wt{\Omega}:\,\wt{A}_{s}\in\wh{\sF}_{\infty}^{s}\,\,\text{for any }s\in\overline{\bR}_{+}\Big\},
    \end{align*}
    where $\wt{A}_{s}:=\{\wh{\omega}\in\wh{\Omega}:\,(s,\wh{\omega})\in\wt{A}\}$ and $\wh{\sF}_{\infty}^{s}$ is the last element in $\wh{\bF}_{s}$ (the filtration in $\cM$).
\item We let $\overline{\sZ}=\sZ\cup\{(\infty,\partial,\infty)\}$ to be the new state space, where $\sZ=\bR_{+}\times\sY=\sX\times\bR$, with elements $z=(s,i,a)$. On $\sZ$ we consider the $\sigma$-field
	\begin{align*}
    \wt{\cB}(\sZ):=\left\{\wt{B}\subset\sZ:\,\wt{B}_{s}\in\cB(\sY)\,\,\,\text{for any }s\in\bR_{+}\right\},
	\end{align*}
    where $\wt{B}_{s}:=\big\{(i,a)\in\sY:\,(s,i,a)\in\wt{B}\big\}$. Let $\wt{\cB}(\overline{\sZ}):=\sigma(\wt{\cB}(\sZ)\cup\{(\infty,\partial,\infty)\})$.
\item We consider a family of probability measures $(\wt{\bP}_{z})_{z\in\overline{\sZ}}$, where, for $z=(s,i,a)\in\overline{\sZ}$,
	\begin{align}\label{eq:Probz}
    \wt{\bP}_{z}\big(\wt{A}\big)=\wt{\bP}_{s,i,a}\big(\wt{A}\big):=\wh{\bP}_{s,(i,a)}\big(\wt{A}_{s}\big),\quad\wt{A}\in\wt{\sF}.
	\end{align}
\item We consider the process $Z:=(Z_{t})_{t\in\overline{\bR}_{+}}$ on $(\wt{\Omega},\wt{\sF})$, where, for $t\in\overline{\bR}_{+}$,
	\begin{align}\label{eq:ProcZ}
    Z_{t}(\wt{\omega}):=\big(s+t,\wh{X}_{s+t}(\wh{\omega}),\wh{\varphi}_{s+t}(\wh{\omega})\big),\quad\wt{\omega}=(s,\wh{\omega})\in\wt{\Omega}.
	\end{align}
	Hereafter, we denote the three components of $Z$ by $Z^{1}$, $Z^{2}$, and $Z^{3}$, respectively.
\item On $(\wt{\Omega},\wt{\sF})$, we define  $\wt{\bF}:=(\wt{\sF}_{t})_{t\in\overline{\bR}_{+}}$, where $\wt{\sF}_{t}:=\wt{\sG}_{t+}$ (with the convention $\wt{\sG}_{\infty+}=\wt{\sG}_{\infty}$), and $(\wt{\sG}_{t})_{t\in\overline{\bR}_{+}}$ is the completion of the natural filtration generated by $(Z_{t})_{t\in\overline{\bR}_{+}}$ with respect to the set of probability measures $\{\wt{\bP}_{z},z\in\overline{\sZ}\}$ (cf. \cite[Chapter I]{GikhmanSkorokhod2004}).
\item Finally, for any $r\in\bR_{+}$, we consider the shift operator ${\theta}_{r}:\wt{\Omega}\rightarrow\wt{\Omega}$ defined by
    \begin{align*}
    \theta_{r}\,\wt{\omega}=(u+r,\omega_{\cdot+r}),\quad\wt{\omega}=(u,\omega)\in\wt{\Omega}.
    \end{align*}
    It follows that $Z_{t}\circ{\theta}_{r}=Z_{t+r}$, for any $t,r\in\bR_{+}$.
\end{itemize}

For $z=(s,i,a)\in\overline{\sZ}$, $t\in\overline{\bR}_{+}$, and $\wt{B}\in\wt{\cB}(\overline{\sZ})$, we define the transition function $\wt{P}$ by
\begin{align*}
\wt{P}\big(z,t,\wt{B}\big):=\wt{\bP}_{z}\big(Z_{t}\in\wt{B}\big).
\end{align*}
In view of \eqref{eq:Probz}, we have
\begin{align}\label{eq:TranProbTildeX}
\wt{P}\big(z,t,\wt{B}\big)=\bP_{s,(i,a)}\Big((\wh{X}_{t+s},\wh{\varphi}_{t+s})\in\wt{B}_{s+t}\Big)=P\big(s,(i,a),s+t,\wt{B}_{s+t}\big).
\end{align}
It can be shown that the transition function $\wh{P}$, defined in \eqref{eq:DefTranProbXvarphi}, is associated with a Feller semigroup, so that $\wh{P}$ is a Feller transition function. This and \cite[Theorem 3.2]{Bottcher2014} imply  that $\wt{P}$ is also a Feller transition function. In light of the right continuity of the sample paths, and invoking \cite[Theorem I.4.7]{GikhmanSkorokhod2004}, we conclude that $\wt{\cM}$ is a {\it time-homogeneous strong} Markov family.

%

In light of \eqref{eq:DistvarphiInt}, \eqref{eq:Probz}, and \eqref{eq:ProcZ}, for any $(s,i,a)\in\sZ$, we have
\begin{align}\label{eq:DistTildeZ3IntTildeZ2}
\wt{\bP}_{s,i,a}\Big(Z^{3}_{t}=a+\int_{0}^{t}v\big(Z^{2}_{u}\big)\,du,\,\,\text{for all }t\in\bR_{+}\Big)=1.
\end{align}

For any $\ell\in\bR$, we define the auxiliary first passage time
\begin{align*}
\wt{\tau}_{\ell}^{+}:=\inf\big\{t\in\overline{\bR}_{+}:\,Z^{3}_{t}>\ell\big\},\quad\wt{\tau}_{\ell}^{-}:=\inf\big\{t\in\overline{\bR}_{+}:\,Z^{3}_{t}<-\ell\big\}.\nonumber
\end{align*}
Let $-\ell^-\le\ell^+$. We define two auxiliary constrained passage times for $\wt{\cM}$ as
\begin{align}\label{eq:DefTauXiTilde}
\wt{\xi}_{\ell^-,\ell^+}^{+}:=\inf\big\{t\in[0,\wt{\tau}_{-\ell^-}^{-}):\,Z^{3}_{t}>\ell^+\big\},\quad \wt{\xi}_{\ell^-,\ell^+}^{-}:=\inf\big\{t\in[0,\wt{\tau}_{\ell^+}^{+}):\,Z^{3}_{t}<-\ell^-\big\},
\end{align}
which are $\wt{\bF}$-stopping times since $Z^{3}$ has continuous sample paths and $\wt{\bF}$ is right-continuous. By \eqref{eq:ExpPStarExpP}, \eqref{eq:ExpPStarExpPXi}, \eqref{eq:Probz}, \eqref{eq:ProcZ} and \eqref{eq:DistTildeZ3IntTildeZ2}, for any $g^{+}\in \cB_b(\overline{\sX_{+}})$, $(s,i,a)\in\sZ$, and $\ell\in[a,\infty)$,
\begin{align}\label{eq:ExpTildePPStarPlus}
\bE_{s,i}\bigg(g^{+}\bigg(\tau_{\ell-a}^{+},X_{\tau_{\ell-a}^{+}}\bigg)\bigg)=\wt{\bE}_{s,i,a}\Big(g^{+}\Big(Z^{1}_{\wt{\tau}_{\ell}^{+}},Z^{2}_{\wt{\tau}_{\ell}^{+}}\Big)\Big),
\end{align}
which, in particular, implies that
\begin{align}\label{eq:ExpTildePShift}
\wt{\bE}_{s,i,a}\Big(g^{+}\Big(Z^{1}_{\wt{\tau}_{\ell}^{+}},Z^{2}_{\wt{\tau}_{\ell}^{+}}\Big)\Big)=\wt{\bE}_{s,i,0}\Big(g^{+}\Big(Z^{1}_{\wt{\tau}_{\ell-a}^{+}},Z^{2}_{\wt{\tau}_{\ell-a}^{+}}\Big)\Big).
\end{align}
Similarly, for any $g^\pm\in \cB_b(\overline{\sX_\pm})$, $(s,i,a)\in\sZ$ and $-\ell^-\le a\le\ell^+$,
\begin{align}\label{eq:ExpTidePStarXiplus}
\bE_{s,i}\Big(g^{+}\Big(\xi_{\ell^-+a,\ell^+-a}^{+},X_{\xi_{\ell^-+a,\ell^+-a}^{+}}\Big)\Big)=\wt{\bE}_{s,i,a}\Big(g^{+}\Big(Z^1_{\wt{\xi}_{\ell^-,\ell^+}^{+}},Z^2_{\wt{\xi}_{\ell^-,\ell^+}^{+}}\Big)\Big).
\end{align}
and
\begin{align}\label{eq:ExpTidePStarXiminus}
\bE_{s,i}\Big(g^{-}\Big(\xi_{\ell^-+a,\ell^+-a}^{-},X_{\xi_{\ell^-+a,\ell^+-a}^{-}}\Big)\Big)=\wt{\bE}_{s,i,a}\Big(g^{-}\Big(Z^1_{\wt{\xi}_{\ell^-,\ell^+}^{-}},Z^2_{\wt{\xi}_{\ell^-,\ell^+}^{-}}\Big)\Big).
\end{align}

We conclude this section with the following lemma. It is an exact adaption of \cite[Lemma 4.2]{BieCheCiaGon2020}, and the proof is therefore omitted here.
\begin{lemma}\label{lem:StrongMarkov}
Let $\wt{\tau}$ be any $\wt{\bF}$-stopping time, and $g^{+}\in \cB_b(\overline{\sX_{+}})$. Then, for any $(s,i,a)\in\overline{\sZ}$ and $\ell\in[a,\infty)$, we have
\begin{align}\label{eq:StrongMarkovCondPlus}
\1_{\{\wt{\tau}\leq\wt{\tau}^{+}_{\ell}\}}\,\wt{\bE}_{s,i,a}\Big(g^{+}\Big(Z^{1}_{\wt{\tau}^{+}_{\ell}},Z^{2}_{\wt{\tau}^{+}_{\ell}}\Big)\,\Big|\,\wt{\sF}_{\wt{\tau}}\Big)=\1_{\{\wt{\tau}\leq\wt{\tau}^{+}_{\ell}\}}\,\wt{\bE}_{Z^{1}_{\wt{\tau}},Z^{2}_{\wt{\tau}},Z^{3}_{\wt{\tau}}}\Big(g^{+}\Big(Z^{1}_{\wt{\tau}^{+}_{\ell}},Z^{2}_{\wt{\tau}^{+}_{\ell}}\Big)\Big),\quad\wt{\bP}_{s,i,a}-\text{a.}\,\text{s.}.
\end{align}
\end{lemma}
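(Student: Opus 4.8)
The plan is to derive \eqref{eq:StrongMarkovCondPlus} directly from the time-homogeneous strong Markov property of $\wt{\cM}$, which is available since $\wt{\cM}$ was shown above to be a time-homogeneous strong Markov family with shift operators $(\theta_{r})_{r\in\bR_{+}}$ satisfying $Z_{t}\circ\theta_{r}=Z_{t+r}$. Write $Y:=g^{+}(Z^{1}_{\wt{\tau}^{+}_{\ell}},Z^{2}_{\wt{\tau}^{+}_{\ell}})$, a bounded $\wt{\sF}$-measurable random variable, and set $A:=\{\wt{\tau}\le\wt{\tau}^{+}_{\ell}\}$. The strong Markov property applied at $\wt{\tau}$ gives
\begin{align*}
\wt{\bE}_{s,i,a}\big(Y\circ\theta_{\wt{\tau}}\,\big|\,\wt{\sF}_{\wt{\tau}}\big)=\wt{\bE}_{Z_{\wt{\tau}}}(Y)=\wt{\bE}_{Z^{1}_{\wt{\tau}},Z^{2}_{\wt{\tau}},Z^{3}_{\wt{\tau}}}(Y)\quad\text{on }\{\wt{\tau}<\infty\},
\end{align*}
with the coffin-state convention ($g^{+}(\infty,\partial)=0$) handling $\{\wt{\tau}=\infty\}$. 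Hence the entire lemma reduces to the pathwise claim that $\1_{A}\,Y=\1_{A}\,(Y\circ\theta_{\wt{\tau}})$, i.e. that the functional $Y$ ``regenerates'' at $\wt{\tau}$ on the event $A$.

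The key step is therefore this pathwise identity, which I expect to be the main obstacle. Since $\ell\ge a$ we have $Z^{3}_{0}=a\le\ell$, and on $A$ the path $t\mapsto Z^{3}_{t}$, continuous by \eqref{eq:DistTildeZ3IntTildeZ2} as $Z^{3}_{t}=a+\int_{0}^{t}v(Z^{2}_{u})\,du$, does not strictly exceed $\ell$ before time $\wt{\tau}$. Consequently the first exceedance of $\ell$ may be restarted at $\wt{\tau}$:
\begin{align*}
\wt{\tau}^{+}_{\ell}=\inf\{t\ge\wt{\tau}:\,Z^{3}_{t}>\ell\}=\wt{\tau}+\inf\{s\ge0:\,Z^{3}_{\wt{\tau}+s}>\ell\}=\wt{\tau}+\big(\wt{\tau}^{+}_{\ell}\circ\theta_{\wt{\tau}}\big)\quad\text{on }A.
\end{align*}
Combining this with $Z_{t}\circ\theta_{r}=Z_{t+r}$, for each $j\in\{1,2\}$ one obtains, pathwise on $A$, that $Z^{j}_{\wt{\tau}^{+}_{\ell}}\circ\theta_{\wt{\tau}}=Z^{j}_{(\wt{\tau}^{+}_{\ell}\circ\theta_{\wt{\tau}})+\wt{\tau}}=Z^{j}_{\wt{\tau}^{+}_{\ell}}$, and therefore $\1_{A}\,(Y\circ\theta_{\wt{\tau}})=\1_{A}\,Y$. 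The delicate point here is that, because of the \emph{strict} inequality in the definition of $\wt{\tau}^{+}_{\ell}$, one must argue using the continuity of $Z^{3}$ that no crossing of $\ell$ is lost or created at the junction $t=\wt{\tau}$; the boundary case $\wt{\tau}=\wt{\tau}^{+}_{\ell}$, where $\wt{\tau}^{+}_{\ell}\circ\theta_{\wt{\tau}}=0$, is automatically included.

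Finally I would verify the measurability needed to multiply through by the indicator. Since $\wt{\tau}^{+}_{\ell}$ is itself an $\wt{\bF}$-stopping time (the right-continuity of $\wt{\bF}$ being exactly what guarantees this), the standard fact $\{\wt{\tau}^{+}_{\ell}<\wt{\tau}\}\in\wt{\sF}_{\wt{\tau}}$ for two stopping times yields $A=\{\wt{\tau}^{+}_{\ell}<\wt{\tau}\}^{c}\in\wt{\sF}_{\wt{\tau}}$. Multiplying the strong Markov identity by $\1_{A}$ and substituting $\1_{A}\,(Y\circ\theta_{\wt{\tau}})=\1_{A}\,Y$ then gives
\begin{align*}
\1_{A}\,\wt{\bE}_{s,i,a}\big(Y\,\big|\,\wt{\sF}_{\wt{\tau}}\big)=\1_{A}\,\wt{\bE}_{s,i,a}\big(Y\circ\theta_{\wt{\tau}}\,\big|\,\wt{\sF}_{\wt{\tau}}\big)=\1_{A}\,\wt{\bE}_{Z^{1}_{\wt{\tau}},Z^{2}_{\wt{\tau}},Z^{3}_{\wt{\tau}}}(Y),
\end{align*}
which is precisely \eqref{eq:StrongMarkovCondPlus}. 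Apart from the restarting identity on $A$, the remainder is routine strong Markov bookkeeping, carried out exactly as in \cite[Lemma 4.2]{BieCheCiaGon2020}.
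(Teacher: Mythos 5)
Your proof is correct and follows the standard route: the paper itself omits the argument, deferring to \cite[Lemma 4.2]{BieCheCiaGon2020}, and that reference proceeds exactly as you do --- apply the time-homogeneous strong Markov property of $\wt{\cM}$ at $\wt{\tau}$ and combine it with the pathwise regeneration identity $\wt{\tau}^{+}_{\ell}=\wt{\tau}+\wt{\tau}^{+}_{\ell}\circ\theta_{\wt{\tau}}$ on $\{\wt{\tau}\leq\wt{\tau}^{+}_{\ell}\}$, which you justify correctly via the continuity of $Z^{3}$ and $Z^{3}_{0}=a\leq\ell$. The measurability of $\{\wt{\tau}\leq\wt{\tau}^{+}_{\ell}\}$ with respect to $\wt{\sF}_{\wt{\tau}}$ and the coffin-state convention on $\{\wt{\tau}=\infty\}$ are handled as the reference does, so no gap remains.
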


\subsection{Proof of Theorem \ref{thm:TwoSidedExit}}\label{subsec:ProofThm}
Let
\begin{align}\label{eq:Defeta}
\wt{\eta}^{+}_{\ell^-,\ell^+}:={\inf\Big\{t\ge \wt{\xi}^{-}_{\ell^-,\ell^+}: Z^3_t>\ell^+\Big\}}.
\end{align}
The auxillary time $\wt{\eta}^{-}_{\ell^-,\ell^+}$ will be used to prove \eqref{eq:Psiplus}. Since the proof of \eqref{eq:Psiminus} can be done in an analogous way, we do not introduce the {`minus'} counterpart of  $\wt{\eta}^{+}_{\ell^-,\ell^+}$. We first establish an important observation.
\begin{lemma}\label{lem:tauhat}
For any $g^+\in \cB_b(\overline{\sX}_+)$ and $-\ell^-\le0\le\ell^+$, we have
\begin{align*}
{\wt{\bE}_{s,i,0}\left(\1_{\set{\wt{\xi}^{-}_{\ell^-,\ell^+}<\infty}}\,g^+\bigg(Z^1_{\wt{\eta}^{+}_{\ell^-,\ell^+}},Z^2_{\wt{\eta}^{+}_{\ell^-,\ell^+}}\bigg)\,\bigg|\,\wt{\sF}_{\wt{\xi}^{-}_{\ell^-,\ell^+}}\right) = \big(J^+\cP^{+}_{\ell^++\ell^-}g^+\big)\bigg(Z^1_{\wt{\xi}^{-}_{\ell^-,\ell^+}},Z^2_{\wt{\xi}^{-}_{\ell^-,\ell^+}}\bigg)}.
\end{align*}
\end{lemma}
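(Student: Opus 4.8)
The plan is to collapse the conditional expectation to a single upward first-passage computation for $\wt{\cM}$ via the strong Markov property, and then to translate it into the operators $J^+$ and $\cP^+_{\ell^++\ell^-}$ using \eqref{eq:ExpTildePPStarPlus} and Proposition \ref{prop:ExpgpsimJPPlus}. The keystone is the pathwise identity $\wt{\eta}^{+}_{\ell^-,\ell^+}=\wt{\tau}^{+}_{\ell^+}$ on $\{\wt{\xi}^{-}_{\ell^-,\ell^+}<\infty\}$, which lets the ready-made Lemma \ref{lem:StrongMarkov} apply directly.

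First I would establish three facts valid on $\{\wt{\xi}^{-}_{\ell^-,\ell^+}<\infty\}$. Since $Z^3$ has continuous paths by \eqref{eq:DistTildeZ3IntTildeZ2}, at the first strict downcrossing of $-\ell^-$ we have $Z^3_{\wt{\xi}^{-}_{\ell^-,\ell^+}}=-\ell^-$. By the analogue of Lemma \ref{lem:RangeXTaupm} for $\wt{\cM}$, the exit state is of negative type, i.e. $Z^2_{\wt{\xi}^{-}_{\ell^-,\ell^+}}\in\mathbf{E}_-$, whence $(Z^1_{\wt{\xi}^{-}_{\ell^-,\ell^+}},Z^2_{\wt{\xi}^{-}_{\ell^-,\ell^+}})\in\sX_-$. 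Finally, by definition of $\wt{\xi}^{-}_{\ell^-,\ell^+}$ as an infimum over $[0,\wt{\tau}^{+}_{\ell^+})$, finiteness forces $\wt{\xi}^{-}_{\ell^-,\ell^+}<\wt{\tau}^{+}_{\ell^+}$, so $Z^3$ has not exceeded $\ell^+$ on $[0,\wt{\xi}^{-}_{\ell^-,\ell^+}]$ and therefore $\wt{\eta}^{+}_{\ell^-,\ell^+}=\wt{\tau}^{+}_{\ell^+}$ there. In particular $\1_{\{\wt{\xi}^{-}_{\ell^-,\ell^+}<\infty\}}\,g^+(Z^1_{\wt{\eta}^{+}_{\ell^-,\ell^+}},Z^2_{\wt{\eta}^{+}_{\ell^-,\ell^+}})=\1_{\{\wt{\xi}^{-}_{\ell^-,\ell^+}<\infty\}}\,g^+(Z^1_{\wt{\tau}^{+}_{\ell^+}},Z^2_{\wt{\tau}^{+}_{\ell^+}})$.

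Next I would apply Lemma \ref{lem:StrongMarkov} with $\wt{\tau}=\wt{\xi}^{-}_{\ell^-,\ell^+}$, starting level $a=0$, and $\ell=\ell^+$; this is legitimate because $\{\wt{\xi}^{-}_{\ell^-,\ell^+}<\infty\}\subseteq\{\wt{\xi}^{-}_{\ell^-,\ell^+}\le\wt{\tau}^{+}_{\ell^+}\}$ and $\{\wt{\xi}^{-}_{\ell^-,\ell^+}<\infty\}\in\wt{\sF}_{\wt{\xi}^{-}_{\ell^-,\ell^+}}$, so the indicator moves freely through the conditional expectation. Substituting $Z^3_{\wt{\xi}^{-}_{\ell^-,\ell^+}}=-\ell^-$, the right-hand side becomes $\1_{\{\wt{\xi}^{-}_{\ell^-,\ell^+}<\infty\}}\,\wt{\bE}_{Z^1_{\wt{\xi}^{-}_{\ell^-,\ell^+}},Z^2_{\wt{\xi}^{-}_{\ell^-,\ell^+}},-\ell^-}(g^+(Z^1_{\wt{\tau}^{+}_{\ell^+}},Z^2_{\wt{\tau}^{+}_{\ell^+}}))$. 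Now \eqref{eq:ExpTildePPStarPlus} with $a=-\ell^-$ and $\ell=\ell^+$ (the hypothesis $\ell\ge a$ reads $\ell^++\ell^-\ge0$) rewrites this as $\bE_{Z^1_{\wt{\xi}^{-}_{\ell^-,\ell^+}},Z^2_{\wt{\xi}^{-}_{\ell^-,\ell^+}}}(g^+(\tau^+_{\ell^++\ell^-},X_{\tau^+_{\ell^++\ell^-}}))$, and since the base point lies in $\sX_-$, Proposition \ref{prop:ExpgpsimJPPlus} identifies it with $(J^+\cP^{+}_{\ell^++\ell^-}g^+)(Z^1_{\wt{\xi}^{-}_{\ell^-,\ell^+}},Z^2_{\wt{\xi}^{-}_{\ell^-,\ell^+}})$. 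To remove the surviving indicator and obtain the stated form, I would observe that on $\{\wt{\xi}^{-}_{\ell^-,\ell^+}=\infty\}$ both sides vanish: the conditioned random variable carries the factor $\1_{\{\wt{\xi}^{-}_{\ell^-,\ell^+}<\infty\}}$, while there $Z^1_{\wt{\xi}^{-}_{\ell^-,\ell^+}}=\infty$, $Z^2_{\wt{\xi}^{-}_{\ell^-,\ell^+}}=\partial$, and $J^+\cP^{+}_{\ell^++\ell^-}g^+\in\cB_b(\overline{\sX_-})$ vanishes at $(\infty,\partial)$.

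The main obstacle is the bookkeeping at the stopping time $\wt{\xi}^{-}_{\ell^-,\ell^+}$: one must verify with care the reduction $\wt{\eta}^{+}_{\ell^-,\ell^+}=\wt{\tau}^{+}_{\ell^+}$ on the relevant event, so that the \emph{absolute} passage-time Lemma \ref{lem:StrongMarkov} applies rather than some relative version, and one must correctly carry the starting level $Z^3_{\wt{\xi}^{-}_{\ell^-,\ell^+}}=-\ell^-$ through \eqref{eq:ExpTildePPStarPlus}. It is precisely this translation of a target level $\ell^+$ measured from height $-\ell^-$ that converts the passage height into $\ell^++\ell^-$ and explains why $\cP^{+}_{\ell^++\ell^-}$, and not $\cP^{+}_{\ell^+}$, appears. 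All remaining steps are routine uses of the strong Markov property and of the identities assembled in Section \ref{subsec:TimeHomogen}.
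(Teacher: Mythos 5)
Your proposal is correct and follows essentially the same route as the paper's proof: the pathwise identity $\wt{\eta}^{+}_{\ell^-,\ell^+}=\wt{\tau}^{+}_{\ell^+}$ on $\{\wt{\xi}^{-}_{\ell^-,\ell^+}<\infty\}$, the application of Lemma \ref{lem:StrongMarkov} with $\wt\tau=\wt{\xi}^{-}_{\ell^-,\ell^+}$, the substitution $Z^3_{\wt{\xi}^{-}_{\ell^-,\ell^+}}=-\ell^-$, and the translation into $J^+\cP^{+}_{\ell^++\ell^-}$ via \eqref{eq:ExpTildePPStarPlus} and Proposition \ref{prop:ExpgpsimJPPlus}. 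The only cosmetic difference is that you apply \eqref{eq:ExpTildePPStarPlus} directly at level $a=-\ell^-$ rather than first invoking the shift identity \eqref{eq:ExpTildePShift}, which is equivalent.
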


\begin{proof}
Observe that
\begin{align*}
{\1_{\set{\wt{\xi}^{-}_{\ell^-,\ell^+}<\infty}}\,g^+\bigg(Z^{1}_{\wt{\eta}^{+}_{\ell^-,\ell^+}},Z^{2}_{\wt{\eta}^{+}_{\ell^-,\ell^+}}\bigg) = \1_{\set{\wt{\xi}^{-}_{\ell^-,\ell^+}<\infty}}\,g^+\bigg(Z^{1}_{\wt{\tau}^{+}_{\ell^+}},Z^{2}_{\wt{\tau}^{+}_{\ell^+}}\bigg).}
\end{align*}
In addition,
\begin{align*}
{\left\{\wt{\xi}^{-}_{\ell^-,\ell^+}<\infty\right\}=\bigcup_{n\in\bN}\left\{\wt{\xi}^{-}_{\ell^-,\ell^+}<n\right\}\in\wt{\sF}_{\wt{\xi}^{-}_{\ell^-,\ell^+}}\quad\text{and}\quad \left\{\wt{\xi}^{-}_{\ell^-,\ell^+}<\infty\right\}\subseteq\left\{\wt{\xi}^{-}_{\ell^-,\ell^+}\le\wt{\tau}^{+}_{\ell^+}\right\}. }
\end{align*}
The above together with Lemma \ref{lem:StrongMarkov} implies that {
\begin{align*}
&\wt{\bE}_{s,i,0}\left(\1_{\left\{\wt{\xi}^{-}_{\ell^-,\ell^+}<\infty\right\}}\,g^+\bigg(Z^{1}_{\wt{\eta}^{+}_{\ell^-,\ell^+}},Z^{2}_{\wt{\eta}^{+}_{\ell^-,\ell^+}}\bigg)\,\bigg|\,\wt{\sF}_{\wt{\xi}^{-}_{\ell^-,\ell^+}}\right)\\
&\quad= \1_{\left\{\wt{\xi}^{-}_{\ell^-,\ell^+}<\infty\right\}}\1_{\left\{\wt{\xi}^{-}_{\ell^-,\ell^+}\le\wt{\tau}^{+}_{\ell^+}\right\}}\wt{\bE}_{s,i,0}\left(g^+\bigg(Z^{1}_{\wt{\tau}^{+}_{\ell^+}},Z^{2}_{\wt{\tau}^{+}_{\ell^+}}\bigg)\,\bigg|\,\wt{\sF}_{\wt{\xi}^{-}_{\ell^-,\ell^+}}\right)\\
&\quad= \1_{\left\{\wt{\xi}^{-}_{\ell^-,\ell^+}<\infty\right\}}\1_{\left\{\wt{\xi}^{-}_{\ell^-,\ell^+}\le\wt{\tau}^{+}_{\ell^+}\right\}}\wt{\bE}_{Z^{1}_{\wt{\xi}^{-}_{\ell^-,\ell^+}},Z^{2}_{\wt{\xi}^{-}_{\ell^-,\ell^+}},Z^{3}_{\wt{\xi}^{-}_{\ell^-,\ell^+}}}\Big(g^{+}\Big(Z^{1}_{\wt{\tau}^{+}_{\ell^+}},Z^{2}_{\wt{\tau}^{+}_{\ell^+}}\Big)\Big)\\
&\quad= \1_{\left\{\wt{\xi}^{-}_{\ell^-,\ell^+}<\infty\right\}}\,\wt{\bE}_{Z^{1}_{\wt{\xi}^{-}_{\ell^-,\ell^+}},Z^{2}_{\wt{\xi}^{-}_{\ell^-,\ell^+}},Z^{3}_{\wt{\xi}^{-}_{\ell^-,\ell^+}}}\Big(g^{+}\Big(Z^{1}_{\wt{\tau}^{+}_{\ell^+}},Z^{2}_{\wt{\tau}^{+}_{\ell^+}}\Big)\Big).
\end{align*}
Notice} that under $\{\wt{\xi}^{-}_{\ell^-,\ell^+}<\infty\}$, $Z^{3}_{\wt{\xi}^{-}_{\ell^-,\ell^+}}=-\ell^-$ thanks to the continuous sample path of $Z^3$. By \eqref{eq:ExpTildePShift}, {\eqref{eq:ExpTildePPStarPlus}, and} Proposition \ref{prop:ExpgpsimJPPlus}, we have
\begin{align*}
&\1_{\{\wt{\xi}^{-}_{\ell^-,\ell^+}<\infty\}}\,\wt{\bE}_{Z^{1}_{\wt{\xi}^{-}_{\ell^-,\ell^+}},Z^{2}_{\wt{\xi}^{-}_{\ell^-,\ell^+}},Z^{3}_{\wt{\xi}^{-}_{\ell^-,\ell^+}}}\Big(g^{+}\Big(Z^{1}_{\wt{\tau}^{+}_{\ell^+}},Z^{2}_{\wt{\tau}^{+}_{\ell^+}}\Big)\Big)\\
&\quad= \1_{\{\wt{\xi}^{-}_{\ell^-,\ell^+}<\infty\}}\,\wt{\bE}_{Z^{1}_{\wt{\xi}^{-}_{\ell^-,\ell^+}},Z^{2}_{\wt{\xi}^{-}_{\ell^-,\ell^+}},0}\Big(g^{+}\Big(Z^{1}_{\wt{\tau}^{+}_{\ell^{-}+\ell^{+}}},Z^{2}_{\wt{\tau}^{+}_{\ell^{-}+\ell^{+}}}\Big)\Big) = {\Big(J^+\cP^{+}_{\ell^{-}+\ell^{+}}g^+\Big)\bigg(Z^1_{\wt{\xi}^{-}_{\ell^-,\ell^+}},Z^2_{\wt{\xi}^{-}_{\ell^-,\ell^+}}\bigg)}.
\end{align*}
The proof is complete.
\end{proof}

We are ready to prove Theorem \ref{thm:TwoSidedExit}.
\begin{proof}[Proof of Theorem \ref{thm:TwoSidedExit}]
We will only present the proof for $\Xi^+$ as the proof for $\Xi^-$ follows analogously. To start with, in view of \eqref{eq:DefTauXiTilde}, $\set{\wt{\xi}^{+}_{\ell^-,\ell^+}<\infty},\set{\wt{\xi}^{-}_{\ell^-,\ell^+}<\infty}${, and} $\set{\wt{\tau}^{+}_{\ell^+}\wedge\wt{\tau}^{-}_{\ell^-}=\infty}$ form a disjoint partition of $\wt{\Omega}$. It follows that
\begin{align}\label{eq:XiRewrite}
{\wt{\tau}^{+}_{\ell^+} = \wt{\xi}^{+}_{\ell^-,\ell^+}\1_{\left\{\wt{\xi}^{+}_{\ell^-,\ell^+}<\infty\right\}} + \wt{\eta}^+_{\ell^-,\ell^+}\1_{\left\{\wt{\xi}^{-}_{\ell^-,\ell^+}<\infty\right\}} + \infty \1_{\left\{\wt{\tau}^{+}_{\ell^+}\wedge\wt{\tau}^{-}_{\ell^-}=\infty\right\}},}
\end{align}
where we recall the definition of $\wt{\eta}^+_{\ell^-,\ell^+}$ from \eqref{eq:Defeta}. Therefore, for $g^+\in\cB(\overline{\sX_+})$, {
\begin{align}\label{eq:EgDecomp}
\wt{\bE}_{s,i,0}\left(g^+\bigg(Z^1_{\wt{\tau}^{+}_{\ell^+}}, Z^2_{\wt{\tau}^{+}_{\ell^+}}\bigg)\right) &= \wt{\bE}_{s,i,0}\left(\1_{\left\{\wt{\xi}^{+}_{\ell^-,\ell^+}<\infty\right\}}\,g^+\bigg(Z^1_{\wt{\xi}^{+}_{\ell^-,\ell^+}}, Z^2_{\wt{\xi}^{+}_{\ell^-,\ell^+}}\bigg)\right)\nonumber\\
&\quad + \wt{\bE}_{s,i,0}\left(\1_{\left\{\wt{\xi}^{-}_{\ell^-,\ell^+}<\infty\right\}}\,g^+\bigg(Z^1_{\wt{\eta}^+_{\ell^-,\ell^+}}, Z^2_{\wt{\eta}^+_{\ell^-,\ell^+}}\bigg)\right).
\end{align}
For} the left hand side of \eqref{eq:EgDecomp}, due to \eqref{eq:ExpTildePPStarPlus} and Proposition \ref{prop:ExpgpsimJPPlus}, we have
\begin{align}\label{eq:EgDecomp1}
{\wt{\bE}_{s,i,0}\left(g^+\bigg(Z^1_{\wt{\tau}^{+}_{\ell^+}}, Z^2_{\wt{\tau}^{+}_{\ell^+}}\bigg)\right) = \left(\begin{pmatrix}I^+\\ J^+\end{pmatrix}\cP^{+}_{\ell^+} g^+\right)(s,i)},\quad (s,i)\in\sX.
\end{align}
For the first term in the right hand of \eqref{eq:EgDecomp}, due to \eqref{eq:ExpTidePStarXiplus} and \eqref{eq:DefXiplus}, we have
\begin{align}\label{eq:EgDecomp2}
{\wt{\bE}_{s,i,0}\left(\1_{\left\{\wt{\xi}^{+}_{\ell^-,\ell^+}<\infty\right\}}\,g^+\bigg(Z^1_{\wt{\xi}^{+}_{\ell^-,\ell^+}}, Z^2_{\wt{\xi}^{+}_{\ell^-,\ell^+}}\bigg)\right) = \Big(\Xi^{+}_{\ell^-,\ell^+}g^+\Big)(s,i),}\quad (s,i)\in\sX.
\end{align}
For the second term in the right hand side of \eqref{eq:EgDecomp}, due to Lemma \ref{lem:tauhat},  \eqref{eq:ExpTidePStarXiminus} and \eqref{eq:DefXiminus}, we have
\begin{align}\label{eq:EgDecomp3}
{\wt{\bE}_{s,i,0}\left(\1_{\left\{\wt{\xi}^{-}_{\ell^-,\ell^+}<\infty\right\}}\,g^+\bigg(Z^1_{\wt{\eta}^{-}_{\ell^-,\ell^+}}, Z^2_{\wt{\eta}^{-}_{\ell^-,\ell^+}}\bigg)\right) = \Big(\Xi^{-}_{\ell^-,\ell^+}J^+\cP^{+}_{\ell^++\ell^-}g^+\Big)(s,i),} \quad (s,i)\in\sX.
\end{align}
Note also that \eqref{eq:EgDecomp1}, \eqref{eq:EgDecomp1}{, and} \eqref{eq:EgDecomp1} remains true for $(s,i)=(\infty,\partial)$ as both hand sides vanish at coffin state. By combining \eqref{eq:EgDecomp}-\eqref{eq:EgDecomp3}, we yield
\begin{align}\label{eq:JPplusDecomp}
\begin{pmatrix}I^+\\ J^+\end{pmatrix}\cP^{+}_{\ell^+} g^+ = \Xi^{+}_{\ell^-,\ell^+}g^+ + \Xi^{-}_{\ell^-,\ell^+}J^+\cP^{+}_{\ell^++\ell^-}g^+.
\end{align}
By analogue, for $g^-\in \cB_b(\overline{\sX_-})$ we also have
\begin{align}\label{eq:JPminusDecomp}
\begin{pmatrix}J^-\\ I^-\end{pmatrix}\cP^{-}_{\ell^-} g^- =  \Xi^{-}_{\ell^-,\ell^+}g^- + \Xi^{+}_{\ell^-,\ell^+}J^-\cP^{-}_{\ell^{-}+\ell^{+}}g^-.
\end{align}
Substituting $J^+\cP^+_{\ell^{-}+\ell^{+}}g^+$ for $g^-$ in \eqref{eq:JPminusDecomp} then subtracting \eqref{eq:JPminusDecomp} from \eqref{eq:JPplusDecomp}, we yield
\begin{align}\label{eq:IndXiIntermediate}
\left(\begin{pmatrix}I^+\\ J^+\end{pmatrix}\cP^{+}_{\ell^+} - \begin{pmatrix}J^-\\ I^-\end{pmatrix}\cP^{-}_{\ell^-}J^+\cP^+_{\ell^{-}+\ell^{+}}\right) g^+ = \Xi^{+}_{\ell^-,\ell^+}\left(I^+ - J^-\cP^{-}_{\ell^{-}+\ell^{+}}J^+\cP^+_{\ell^{-}+\ell^{+}}\right)g^+.
\end{align}
Below we point out a useful observation, that is, for $N\in\bN$ we have
\begin{align*}
{\Big(I^+-J^-\cP^-_{\ell^{-}+\ell^{+}}J^+\cP^+_{\ell^{-}+\ell^{+}}\Big)\sum_{n=0}^N \Big(J^-\cP^-_{\ell^{-}+\ell^{+}}J^+\cP^+_{\ell^{-}+\ell^{+}}\Big)^n g^+ \!= g^+ \!-\Big(J^-\cP^-_{\ell^{-}+\ell^{+}}J^+\cP^+_{\ell^{-}+\ell^{+}}\Big)^{N+1} g^+.}
\end{align*}
By Lemma \ref{lem:NeumannSum}, letting $N\to\infty$, we yield
\begin{align}\label{eq:NeumannSum}
{\Big(I^+-J^-\cP^-_{\ell^{-}+\ell^{+}}J^+\cP^+_{\ell^{-}+\ell^{+}}\Big)\sum_{k=0}^\infty\Big(J^-\cP^-_{\ell^{-}+\ell^{+}}J^+\cP^+_{\ell^{-}+\ell^{+}}\Big)^n g^+ = g^+,} \quad \ell^\pm\in\bR_+.
\end{align}
Finally, in view of \eqref{eq:IndXiIntermediate} and \eqref{eq:NeumannSum}, substituting $\sum_{k=0}^\infty (J^-\cP^-_{\ell^{-}+\ell^{+}}J^+\cP^+_{\ell^{-}+\ell^{+}})^k g^+$ for $g^+$ in\eqref{eq:IndXiIntermediate}, the proof is complete.
\end{proof}

\subsection{Proof of Proposition \ref{prop:AfterExit}}
We present below the proof of Proposition \ref{prop:AfterExit}.
\begin{proof}[Proof of Proposition \ref{prop:AfterExit}]
Notice that
\begin{align*}
\bE_{s,i}\left(h(X_T)\1_{\left\{\tau^{-}_{\ell^-}\wedge\tau^{+}_{\ell^+}\le T\right\}}\right) = \bE_{s,i}\left(\bE_{s,i}\left(h(X_T)\1_{\left\{\tau^{-}_{\ell^-}\wedge\tau^{+}_{\ell^+}\le T\right\}}\,\bigg|\,\sF_{\tau^{-}_{\ell^-}\wedge\tau^{+}_{\ell^+}\wedge T}\right)\right).
\end{align*}
Because
\begin{align*}
&\left\{\tau^{-}_{\ell^-}(s)\wedge\tau^{+}_{\ell^+}(s) \le T\right\} \cap \left\{\tau^{-}_{\ell^-}(s)\wedge\tau^{+}_{\ell^+}(s)\wedge T \le t\right\}=\begin{cases}
\left\{\tau^{-}_{\ell^-}(s)\wedge\tau^{+}_{\ell^+}(s) \le t\right\} \in \sF_{t}, & t< T\\
\left\{\tau^{-}_{\ell^-}(s)\wedge\tau^{+}_{\ell^+}(s) \le T\right\} \in \sF_{t}, & t\ge T
\end{cases},
\end{align*}
we have {$\1_{\{\tau^{-}_{\ell^-}(s)\wedge\tau^{+}_{\ell^+}(s) \le T\}}$} is  $\sF_{\tau^{-}_{\ell^-}(s)\wedge\tau^{+}_{\ell^+}(s)\wedge T}$-measurable. Therefore,
\begin{align*}
\bE_{s,i}\left(h(X_T)\1_{\left\{\tau^{-}_{\ell^-}\wedge\tau^{+}_{\ell^+}\le T\right\}}\right) = \bE_{s,i}\left(\1_{\left\{\tau^{-}_{\ell^-}\wedge\tau^{+}_{\ell^+}\le T\right\}}\,{\bE_{s,i}\bigg(h(X_T)\,\bigg|\,\sF_{\tau^{-}_{\ell^-}\wedge\tau^{+}_{\ell^+}\wedge T}\bigg)}\right).
\end{align*}
By \cite[Theorem I.4.6]{GikhmanSkorokhod2004},
\begin{align*}
\bE_{s,i}\left(h(X_T)\1_{\left\{\tau^{-}_{\ell^-}\wedge\tau^{+}_{\ell^+}\le T\right\}}\right) &= \bE_{s,i}\left(\1_{\left\{\tau^{-}_{\ell^-}\wedge\tau^{+}_{\ell^+}\le T\right\}}\bE_{\tau^{-}_{\ell^-}\wedge\tau^{+}_{\ell^+}\wedge T,X_{\tau^{-}_{\ell^-}\wedge\tau^{+}_{\ell^+}\wedge T}}\left(h(X_T)\right)\right)\\
&= \bE_{s,i}\left(\1_{[0,T]}\left(\tau^{-}_{\ell^-}\wedge\tau^{+}_{\ell^+}\right)\bE_{\tau^{-}_{\ell^-}\wedge\tau^{+}_{\ell^+},X_{\tau^{-}_{\ell^-}\wedge\tau^{+}_{\ell^+}}}\left(h(X_T)\right)\right).
\end{align*}
In view of \eqref{eq:SpecialDefgplus} and \eqref{eq:XipmSum}, the proof is complete.
\end{proof}

\section{Concluding remarks and future work}\label{sec:Conclusion}
In this paper, we have shown that certain expectation associated with two-sided exit time can be expressed in terms of expectation operator of one-sided exit time. Our proof is based on the probabilistic decomposition such as \eqref{eq:XiRewrite} and its analytic counterpart \eqref{eq:JPminusDecomp}. This decomposition emerges from the Markov property of an appropriately extended time-space process, introduced in Section \ref{subsec:TimeHomogen}. We would like to highlight that Lemma \ref{lem:NeumannSum} provides a crucial regularity that aids our proof. We conjecture that an analogous regularity holds true in the presence of additive Brownian noise (potentially also driven by the Markov chain), i.e., 
\begin{align*}
\phi_t(s) = \int_s^t v(X_u)d u + \int_s^t\sigma(X_u)d W_u,
\end{align*}
where $X$ and $W$ are independent. Consequently, a similar methodology can be employed to study the expectation associated with the two-sided exit time in this scenario.


\bibliographystyle{alpha}

\end{document}